\theoremstyle{plain}
\newtheorem{theorem}{Theorem}[section]
\newtheorem{lemma}[theorem]{Lemma}
\newtheorem{proposition}[theorem]{Proposition}
\newtheorem{corollary}[theorem]{Corollary}
\theoremstyle{definition}
\newtheorem{definition}[theorem]{Definition}
\newtheorem{remark}[theorem]{Remark}
\newtheorem{condition}[theorem]{Condition}
\DeclareMathSymbol{\widehatsym}{\mathord}{largesymbols}{"62}
\newcommand\lowerhathatsym{%
  \text{\smash{\rlap{\raisebox{-1.1ex}{$\widehatsym$}}\raisebox{-1.4ex}{%
    $\widehatsym$}}}}
\newcommand\hathatnospace[1]{%
  \mathchoice
    {\accentset{\displaystyle\lowerhathatsym}{#1}}
    {\accentset{\textstyle\lowerhathatsym}{#1}}
    {\accentset{\scriptstyle\lowerhathatsym}{#1}}
    {\accentset{\scriptscriptstyle\lowerhathatsym}{#1}}
}
\newcommand\widehathat[1]{%
   \mathclap{\phantom{\hat{#1}}}{\hathatnospace{#1}}}
\newcommand{\ZZ}{\mathbb{Z}} 
\newcommand{\QQ}{\mathbb{Q}} 
\newcommand{\CC}{\mathbb{C}} 
\newcommand{\iso}{\cong}      
\newcommand{\sheaf}[1]{\mathscr{#1}} 
\newcommand{\OO}{{\sheaf{O}}}   
\newcommand{\tsum}{{\textstyle\sum}}  
\newcommand{\res}[2]{\left.#1\right|_{#2}} 
\newcommand{\isoto}{\overset{\smash{\raisebox{-0.6ex}{$\textstyle\sim\;$}}}{\to}} 
\newcommand{\tensor}{\otimes}
\newcommand{\into}{\hookrightarrow} 
\newcommand{\onto}{\twoheadrightarrow} 
\newcommand{\HHom}{\mathcal{H}\!\mathit{om}} 
\newcommand{\EExt}{\mathcal{E}\!\mathit{xt}} 
\DeclareMathOperator{\ch}{ch} 
\DeclareMathOperator{\Pic}{Pic} 
\DeclareMathOperator{\End}{End}
\DeclareMathOperator{\Aut}{Aut}
\DeclareMathOperator{\Ext}{Ext}
\DeclareMathOperator{\Spec}{Spec}
\DeclareMathOperator{\Ker}{Ker}
\DeclareMathOperator{\tr}{tr} 
\DeclareMathOperator{\Hilb}{Hilb}
\DeclareMathOperator{\DT}{DT} 
\begin{document}

\title[Donaldson--Thomas on abelian threefolds]{Donaldson--Thomas invariants for complexes on abelian threefolds}

\author{Martin G. Gulbrandsen}

\address{Stord/Haugesund University College, Norway}
\email{martin.gulbrandsen@hsh.no}
\subjclass[2010]{Primary 14N35; Secondary 14K05 14D20}

\begin{abstract}
Donaldson--Thomas invariants for moduli spaces $M$
of perfect complexes on an abelian threefold $X$ are usually zero.
A better object is the quotient $K=[M/X\times\widehat{X}]$ of complexes
modulo twist and translation. Roughly speaking, this amounts to fixing not
only the determinant of the complexes in $M$, but also that of their
Fourier--Mukai transform. We modify 
the standard perfect symmetric obstruction theory for perfect complexes to
obtain a virtual fundamental class, giving rise to a DT-type invariant of
the quotient $K$. It is insensitive to deformations
of $X$, and respects derived equivalence. As illustrations we examine
the case of Picard bundles and of Hilbert schemes of points.
\end{abstract}

\maketitle

\section{Introduction}\label{sec:intro}

The aim of this paper is to attach nontrivial Donaldson--Thomas type invariants
to abelian threefolds.

Donaldson--Thomas invariants, as defined by Thomas \cite{thomas2000}, are
integers associated to (proper) moduli spaces $M$ of stable coherent sheaves on
a projective threefold $X$ with trivial canonical bundle. By definition, the
Donaldson--Thomas invariant is the degree of the virtual fundamental class (of
dimension zero) associated to a natural perfect obstruction theory on $M$.

If $\Pic^0(X)$ is nontrivial, so that line bundles deform, the perfect obstruction
theory contains a trivial summand, causing the virtual fundamental class and
hence its degree to be zero. The standard remedy is to shrink $M$ so that it parametrizes
only sheaves $\sheaf{E}$ with fixed determinant line bundle $\det(\sheaf{E})$.
The resulting invariant makes sense when $X$ is an abelian threefold,
but is still almost always zero, as there is another trivial summand present
in the obstruction theory. Roughly speaking, this summand controls
deformations of the determinant line bundle $\det(\widehat{\sheaf{E}})$
of the Fourier--Mukai transform $\widehat{\sheaf{E}}$. In this text
we call $\det(\widehat{\sheaf{E}})$ the \emph{codeterminant} of $\sheaf{E}$.

In Section \ref{sec:det-codet} we modify the standard perfect obstruction theory to
obtain a virtual fundamental class on moduli spaces for sheaves --- and more
generally perfect complexes --- on abelian threfolds, with fixed determinant and codeterminant. This
gives rise to a DT-type invariant which is nontrivial in general. In Section
\ref{sec:K} we package the data in a more natural way by forming the stack
quotient $K = [M/X\times\widehat{X}]$ and assigning a DT-invariant to $K$. This
invariant is insensitive to deformations of $X$, and it agrees with Behrend's
weighted Euler characteristic of $K$. In particular it is intrinsic to $K$.
Moreover, it
respects derived equivalence in the following sense: if $Y$ is a second
abelian threefold such that the derived categories $D(X) \iso D(Y)$ are equivalent,
then $M$ may equally well be considered as a moduli space for complexes on $Y$,
but the quotient $K$ and its DT-invariant stay the same.

As indication that the quotient $K$ is a natural object, and that its
DT-invariant is nontrivial in general, we examine two examples in Section
\ref{sec:examples}: Picard bundles, and Hilbert schemes of points.

\subsection{Notation}
Let $\pi\colon X\to S$ be a projective abelian scheme over a separated, noetherian
and connected base scheme $S$, itself defined over an algebraically closed
field $k$ of characteristic zero.

In some sections we let $k=\CC$ to apply
a result of Mukai (Section \ref{sec:chern}),
relating the Chern character of a complex with that of its Fourier--Mukai transform.
Furthermore, the characteristic zero assumption is used to ensure
that line bundles deform freely.

The group law is written
$m\colon X\times_S X\to X$,
and translation along an $S'$-valued point $x\in X(S')$ is written
$T_x\colon X_{S'}\to X_{S'}$. We let $\sheaf{P}$
denote the normalized Poincar\'e bundle on $X\times_S\widehat{X}$, where
$\widehat{\pi}\colon \widehat{X}\to S$
is the dual abelian scheme.
When $\sheaf{L}$ is a line bundle on $X$, we let $\phi_{\sheaf{L}}\colon X\to\widehat{X}$
be the $S$-morphism defined by the line bundle $m^*\sheaf{L} \tensor p_1^*\sheaf{L}^{-1}\tensor p_2^*\sheaf{L}^{-1}$.
We denote by $\sheaf{P}_\xi$ the restriction $(1\times\xi)^*\sheaf{P}$ for a valued point $\xi\in \widehat{X}(S')$.
Abusing notation slightly,
we also write $\sheaf{P}_x$ for $(x\times 1)^*\sheaf{P}$ when $x\in X(S')$.

We write the Fourier--Mukai transform of a complex $\sheaf{E}$ of $\OO_X$-modules
as $\widehat{\sheaf{E}}$; it is a complex
on $\widehat{X}$. This is a deviation from the literature:
the notation $\widehat{\sheaf{E}}$ is usually reserved for WIT-sheaves.
The relative version of Mukai's Fourier inversion theorem \cite[Theorem 1.1]{mukai87} gives a functorial isomorphism
\begin{equation}\label{eq:fourier}
\widehathat{\sheaf{E}} \iso (-1)^*\sheaf{E}\tensor\omega_{X/S}[-g]
\end{equation}
where $g$ is the relative dimension of $X$ over $S$.

Since the Fourier--Mukai transform may map a sheaf to an honest complex, i.e.~having
nontrivial cohomology in more than one degree,
it is natural to work with complexes from the start.
Complexes will be considered as objects in the derived category, and
functors $f_*$, $f^*$, $\tensor$, $\HHom$ etc.\ will mean their derived versions throughout.
In particular, restriction of a complex to a subscheme, such as a fibre of a morphism,
means derived restriction.
We write $H^i$ and $\Ext^i$ for hypercohomology and hyperext.
The cohomology sheaves of a complex will be denoted $h^p(-)$.
Thus, the classical $p$'th derived functor of, say $f_*$, will be written $h^pf_*(-)$.

We use line bundle and vector bundle as synonyms for invertible sheaf
and locally free sheaf.

\subsection{Moduli spaces}\label{sec:moduli}

A complex $\sheaf{E}$ of sheaves on a scheme $X/S$
is \emph{perfect} if it is Zariski locally isomorphic to a bounded complex of locally free sheaves of finite rank.
It is \emph{simple} if
\begin{equation*}
\Ext^p_{X_s}(\sheaf{E}_s, \sheaf{E}_s) =
\begin{cases}
k & p=0,\\
0 & p<0
\end{cases}
\end{equation*}
for all $k$-valued points $s\in S$.
(To save ink we thus let simplicity also encompass the property of having no negative Exts.)

By a \emph{moduli space} $M/S$ for simple perfect complexes on $X$,
we mean a subfunctor, representable by scheme or algebraic space of finite type$/S$, of the functor sending a scheme $T$ to
\begin{equation*}
\left\{\parbox{20ex}{perfect simple complexes on $X\times_S T$}\right\}
\bigg/\parbox{20ex}{twist by $\Pic(T)$ and quasi-isomorphism}
\end{equation*}
and which is \emph{locally complete}:
whenever a $T$-valued point $\sheaf{E}\in M(T)$
extends to a complex $\overline{\sheaf{E}}$ on $X\times_S \overline{T}$
for an infinitesimal thickening $T\subset \overline{T}$,
then $\overline{\sheaf{E}}$ is a $\overline{T}$-valued point of $M$.

Weaker notions of moduli spaces suffice, and we will allow Simpson moduli
spaces $M$ for (locally free resolutions of)
stable sheaves, with fixed Hilbert polynomial. This is not quite covered by the
definition above: a universal family $\sheaf{E}$ on $M\times X$ may only
exist (\'etale) locally.
Still the sheaf $\HHom(\sheaf{E},\sheaf{E})$ does exist globally, and we will ignore that the family $\sheaf{E}$ itself
may not exist as a sheaf.
We are not concerned with stability conditions for complexes in general,
but take the moduli space $M$ as given. 

Eventually, we will assume that the moduli space $M$ is proper, as we want
to take degrees of zero cycles on it.

\subsection{Obstruction theory}

For the following construction, we refer to Huybrechts--Thomas'
treatment \cite{HT2010} of obstruction theory for perfect complexes,
which we follow closely.

Let $L_{M/S}$ denote the truncation to degrees $\ge -1$ of the
cotangent complex of $M/S$. If $M$ embeds as a closed subscheme of a
smooth scheme $W$ over $S$, with ideal $\sheaf{I}\subset \OO_W$, this is
\begin{equation*}
L_{M/S}\colon \left( \sheaf{I}/\sheaf{I}^2 \to \res{\Omega_{W/S}}{M} \right)
\end{equation*}
with nonzero objects in degree $-1$ and $0$.

Associated to the
universal complex $\sheaf{E}$ on $X\times_S M$ there is an element,
the Atiyah class of $\sheaf{E}$, in
$\Ext^1(\sheaf{E},\sheaf{E}\tensor L_{\left.X\times_S M\right/S})$.
Since $p_2^*L_{M/S}$ is a direct summand of $L_{\left.X\times_S M\right/S}$,
the Atiyah class projects
to an element in
\begin{align*}
\Ext^1(\sheaf{E},\sheaf{E}\tensor p_2^*L_{M/S})
&\iso
H^1(\HHom(\sheaf{E},\sheaf{E})\tensor p_2^*L_{M/S})\\
&\iso
H^1(p_{2*}\HHom(\sheaf{E},\sheaf{E})\tensor L_{M/S}) \\
&\iso
\Ext^1((p_{2*}\HHom(\sheaf{E},\sheaf{E}))^\vee, L_{M/S})
\end{align*}
and thus defines a morphism
\begin{equation}\label{eq:obstr-morphism}
(p_{2*}\HHom(\sheaf{E},\sheaf{E}))^\vee[-1] \to L_{M/S}
\end{equation}
in the derived category.
(By relative duality, 
the source may be identified with
$p_{2*}\HHom(\sheaf{E},\sheaf{E}\tensor p_1^*\omega_{X/S})[g-1]$,
as is commonly done in the literature; we prefer not to.)

Replacing $\HHom$ with the kernel of the trace map,
and $M$ with its subspace $M(\sheaf{L})$ of complexes with fixed determinant,
the morphism \eqref{eq:obstr-morphism} becomes
a relative obstruction theory \cite[Theorem 4.1]{HT2010}
in the sense of Behrend--Fantechi \cite{BF97}.

\section{Deformations with fixed determinant and codeterminant}\label{sec:det-codet}

\subsection{Chern class condition}\label{sec:chern}

In this section, $S=\Spec \CC$ and $X$ is an abelian variety of dimension $g$.
Let $\ch$ be the Chern character in $\bigoplus_p H^{2p}(X,\ZZ)$ of a perfect complex $\sheaf{E}$,
with homogeneous decomposition
\begin{equation*}
\ch = r + c_1 + \cdots + \gamma + \chi.
\end{equation*}
We view the rank $r$ and the Euler characteristic $\chi$ as integers.
By a result of Mukai \cite[Corollary 1.18]{mukai87},
the Chern character of the Fourier--Mukai transform $\widehat{\sheaf{E}}$ is
\begin{equation*}
\widehat{\ch} = \chi - \gamma + \cdots + (-1)^{g-1} c_1 + (-1)^g r
\end{equation*}
when identifying 
\begin{equation}\label{eq:PD}
H^{2p}(X,\ZZ)\iso H^{2(g-p)}(X,\ZZ)^{\vee} \iso H^{2(g-p)}(\widehat{X},\ZZ)
\end{equation}
via Poincar\'e duality.

To the divisor class $c_1$ there is an associated homomorphism $\phi_{c_1}\colon X\to \widehat{X}$.
Via \eqref{eq:PD}, the curve class $\gamma$ corresponds to a divisor class on $\widehat{X}$,
(the first Chern class of $\widehat{\sheaf{E}}$, up to sign);
denote the associated homomorphism by $\psi_\gamma\colon \widehat{X}\to X$.

Now let $M$ be a moduli space of perfect complexes with Chern character $\ch$, big
enough to be stable under the actions of $X$ by translation and
$\widehat{X}=\Pic^0(X)$ by twist: a (valued) point $(x,\xi)\in X\times\widehat{X}$
acts on complexes $\sheaf{E}\in M$ by
\begin{equation*}
\sheaf{E} \mapsto T_{-x}^*(\sheaf{E})\tensor\sheaf{P}_\xi,
\end{equation*}
where the sign on $x$ is for cosmetic reasons.

The following is modelled on the constructions of Beauville \cite[Section 7]{beauville83} and
Yoshioka \cite[Section 4.1]{yoshioka2001}
in the case of sheaves on abelian surfaces.
The determinant $\det(\sheaf{E})$ and codeterminant $\det(\widehat{\sheaf{E}})$
line bundles of the universal family on $M\times X$
define a morphism
\begin{equation}\label{eq:delta}
\delta\colon M \to \Pic^{-\gamma}(\widehat X) \times \Pic^{c_1}(X)
\end{equation}
($\gamma$ is viewed as a divisor class on $\widehat X$).
We want to use the action of $X\times\widehat{X}$ on $M$ to ensure that all fibres of
$\delta$ are isomorphic.

\begin{condition}\label{condition}
Assume that
\begin{equation*}
\begin{pmatrix}
\chi & -\psi_{\gamma} \\
-\phi_{c_1} & r
\end{pmatrix} \in \End(X\times\widehat{X})
\end{equation*}
is an isogeny.
\end{condition}

The condition is rather weak, as we will indicate in Proposition \ref{prop:semihom}.
Recall that an \emph{isotrivial fibration} is a morphism that
becomes the projection from a product after an \'etale base change.

\begin{proposition}\label{prop:isotrivial}
Impose Condition \ref{condition}. Then $\delta$ is an isotrivial fibration.
\end{proposition}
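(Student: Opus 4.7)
The plan is to show that $\delta$ is equivariant for natural actions of $X\times\widehat{X}$ on source and target, where the action on the target is translation through the endomorphism of Condition \ref{condition}, and then to deduce isotriviality by pulling back along the orbit map.

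First I identify $\Pic^{-\gamma}(\widehat{X})\times\Pic^{c_1}(X)$ as a torsor under $\Pic^0(\widehat{X})\times\Pic^0(X)\iso X\times\widehat{X}$ acting by translation, and compute how the pair $(\det\widehat{\sheaf{E}},\det\sheaf{E})$ changes under $\sheaf{E}\mapsto T_{-x}^*\sheaf{E}\tensor\sheaf{P}_\xi$. The determinant behaves as $\det(T_{-x}^*\sheaf{E}\tensor\sheaf{P}_\xi)\iso T_{-x}^*\det\sheaf{E}\tensor\sheaf{P}_\xi^{\tensor r}$, and by definition of $\phi_{c_1}$ the factor $T_{-x}^*\det\sheaf{E}\tensor(\det\sheaf{E})^{-1}$ is classified by $-\phi_{c_1}(x)\in\widehat X$, so the determinant is translated by $-\phi_{c_1}(x)+r\xi$. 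For the codeterminant I invoke the two Fourier--Mukai compatibilities
\begin{equation*}
\widehat{T_{-x}^*\sheaf{E}}\iso\widehat{\sheaf{E}}\tensor\sheaf{P}_x,\qquad \widehat{\sheaf{E}\tensor\sheaf{P}_\xi}\iso T_\xi^*\widehat{\sheaf{E}},
\end{equation*}
both immediate from the see-saw principle and the projection formula, yielding $\widehat{(x,\xi)\cdot\sheaf{E}}\iso T_\xi^*\widehat{\sheaf{E}}\tensor\sheaf{P}_x$. Passing to determinants and using that $c_1(\widehat{\sheaf{E}})=-\gamma$ while the rank of $\widehat{\sheaf{E}}$ equals $\chi$ (Mukai, Section \ref{sec:chern}), the codeterminant is translated by $\chi x-\psi_\gamma(\xi)\in X$. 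Thus $\delta$ is $(X\times\widehat{X})$-equivariant, with induced action on the base given by the endomorphism of Condition \ref{condition}.

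By hypothesis this endomorphism is an isogeny, hence a finite étale surjection $X\times\widehat{X}\to X\times\widehat{X}$ in characteristic zero. Fixing a base point $\sheaf{E}_0\in M$, the orbit map
\begin{equation*}
\mu\colon X\times\widehat{X}\to \Pic^{-\gamma}(\widehat{X})\times\Pic^{c_1}(X),\qquad (x,\xi)\mapsto \delta\bigl((x,\xi)\cdot\sheaf{E}_0\bigr)
\end{equation*}
is therefore also a finite étale surjection, and equivariance produces an isomorphism
\begin{equation*}
M\times_{\Pic^{-\gamma}(\widehat{X})\times\Pic^{c_1}(X)}(X\times\widehat{X})\isoto\delta^{-1}\bigl(\delta(\sheaf{E}_0)\bigr)\times(X\times\widehat{X}),\quad (\sheaf{E},(x,\xi))\mapsto\bigl((x,\xi)^{-1}\cdot\sheaf{E},(x,\xi)\bigr),
\end{equation*}
under which the pullback of $\delta$ along $\mu$ becomes the second projection from a product. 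This is the definition of isotriviality.

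The hard part is the equivariance calculation of the first main paragraph: keeping track of signs in the two Fourier--Mukai compatibilities and in the Mukai identification $c_1(\widehat{\sheaf{E}})=-\gamma$ so that the translation action on the base is really the matrix of Condition \ref{condition}. Once that identification is secured, the isogeny hypothesis and the torsor/base-change formalism deliver isotriviality essentially automatically.
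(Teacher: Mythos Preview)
Your argument is correct and follows essentially the same route as the paper: you establish $X\times\widehat{X}$-equivariance of $\delta$ by computing how determinant and codeterminant transform under translation and twist (recovering exactly the matrix of Condition~\ref{condition}), and then use that the induced action on the target is via an isogeny, hence \'etale in characteristic zero, to trivialize $\delta$ after pullback along the orbit map. The paper packages this last step as a Cartesian square with the action map on top, which is just the inverse of the isomorphism you write down explicitly.
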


\begin{proof}
Let $\sheaf{E}\in M$ and $(\sheaf{L}',\sheaf{L}) = \delta(\sheaf{E})$. Then
\begin{align*}
\det(T_{-x}^*(\sheaf{E})\tensor\sheaf{P}_\xi) &=
\underbrace{(T_{-x}^*\sheaf{L}\tensor\sheaf{L}^{-1})}_{\phi_{c_1}(-x)}\tensor\sheaf{P}_{r\xi}\tensor\sheaf{L}\\
\det(\widehat{T_{-x}^*(\sheaf{E})\tensor\sheaf{P}_\xi}) &=
\underbrace{(T_\xi^*\sheaf{L}'\tensor(\sheaf{L}')^{-1})}_{\psi_{-\gamma}(\xi)}\tensor\sheaf{P}_{\chi x}\tensor\sheaf{L}'
\end{align*}
(use that the Fourier--Mukai transform exchanges twist and translation, up to sign \cite{mukai87}).
Thus, if $X\times\widehat{X}$ acts on the base of $\delta$ via the matrix in Condition \ref{condition}
followed by addition in $\Pic(\widehat{X})\times\Pic(X)$,
\begin{equation*}
X\times\widehat{X} \xrightarrow{\text{isogeny}} X\times\widehat{X}
= \Pic^0(\widehat{X}) \times \Pic^0(X) \curvearrowright \Pic^{-\gamma}(\widehat X) \times \Pic^{c_1}(X),
\end{equation*}
then $\delta$ is $X\times\widehat{X}$-equivariant.

It follows that, for any fibre $M(\sheaf{L}',\sheaf{L}) = \delta^{-1}(\sheaf{L}',\sheaf{L})$
of $\delta$, 
there is a Cartesian diagram
\begin{equation*}
\begin{diagram}
(X\times\widehat{X}) \times M(\sheaf{L}',\sheaf{L}) & \rTo^{\text{action}} & M \\
\dTo_{\text{projection}} &  & \dTo_\delta \\
X\times\widehat{X} & \rTo^{\text{isogeny}} & \Pic^{-\gamma}(\widehat X) \times \Pic^{c_1}(X)
\end{diagram}
\end{equation*}
where the isogeny at the bottom is defined by the action on $(\sheaf{L}',\sheaf{L})$.
\end{proof}

Recall the endomorphism construction studied by Morikawa \cite{morikawa54} and Matsusaka
\cite{matsusaka59}: if $\sigma$ and $\tau$ are cycles of complementary dimension on
an abelian variety $X$, then $\alpha=\alpha(\sigma,\tau)$ is the endomorphism
\begin{equation*}
\alpha(x) = \tsum \sigma\tau_x - \tsum \sigma\tau,
\end{equation*}
where $\tau_x = T_x(\tau)$ is the translated cycle, and
the sum means addition of zero cycles using the group law on $X$.
This endomorphism
depends only on the numerical equivalence classes of $\sigma$ and $\tau$ \cite[Theorem 1]{matsusaka59}.
Condition \ref{condition} may be checked on the ``determinant'' of the matrix appearing:

\begin{lemma}\label{lemma:isogeny}
Condition \ref{condition} holds if and only if the homomorphism
\begin{equation*}
r\chi - \alpha(\gamma, c_1)\in \End(X)
\end{equation*}
is an isogeny.
\end{lemma}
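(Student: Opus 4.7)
The approach is to multiply $A$ by an ``adjugate'' $B$ producing a diagonal endomorphism whose entries are essentially $r\chi-\alpha(\gamma,c_1)$, and then to argue by degrees that $A$ and the product are simultaneously isogenies. Set
\[
B = \begin{pmatrix} r & \psi_\gamma \\ \phi_{c_1} & \chi \end{pmatrix} \in \End(X\times\widehat X).
\]
Since $r$ and $\chi$ act as scalar multiplications, they commute with every homomorphism, and direct block multiplication gives
\[
AB = BA = \begin{pmatrix} r\chi - \psi_\gamma\phi_{c_1} & 0 \\ 0 & r\chi - \phi_{c_1}\psi_\gamma \end{pmatrix} =: D.
\]

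I would then check that $\deg A = \deg B$, so that by multiplicativity of degree $\deg D = (\deg A)^2$ and consequently $A$ is an isogeny if and only if $D$ is. Using the self-duality of $\phi_{c_1}$ and $\psi_\gamma$, the dual endomorphism $A^\vee$ on $\widehat{X\times\widehat X} = \widehat X\times X$ has matrix $\bigl(\begin{smallmatrix}\chi & -\phi_{c_1} \\ -\psi_\gamma & r\end{smallmatrix}\bigr)$. Conjugating first by the swap isomorphism $X\times\widehat X\to\widehat X\times X$ and then by $\mathrm{diag}(1,-1)\in\Aut(X\times\widehat X)$ transforms $A^\vee$ into $B$, so $\deg B = \deg A^\vee = \deg A$ as claimed.

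It remains to relate the two diagonal entries of $D$ to the endomorphism $r\chi-\alpha(\gamma,c_1)$. For the upper-left entry this is Matsusaka's classical identity $\psi_\gamma\circ\phi_{c_1} = \alpha(\gamma,c_1)$, expressing his composite as a composition of polarization-type maps, see \cite{matsusaka59}. For the lower-right entry, the biduality $\widehat{\widehat X} = X$ together once more with the self-duality of $\phi_{c_1}$ and $\psi_\gamma$ shows that $r\chi-\phi_{c_1}\psi_\gamma \in \End(\widehat X)$ is the dual of $r\chi-\psi_\gamma\phi_{c_1}$, so these are isogenies simultaneously. The step requiring most care is pinning down signs in Matsusaka's identity in terms of the $\psi_\gamma$ fixed via the Poincar\'e duality isomorphism \eqref{eq:PD}; the remainder is formal manipulation with degrees of homomorphisms of abelian varieties.
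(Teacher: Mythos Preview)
Your proposal is correct and follows essentially the paper's route: introduce the adjugate $B$, compute the diagonal product $AB$, note that the two diagonal entries are mutually dual, and reduce everything to the identity $\psi_\gamma\circ\phi_{c_1}=\alpha(\gamma,c_1)$. The only real difference is that the paper does not cite this last identity but proves it---reducing by $\ZZ$-linearity in $\gamma$ to the case of an integral curve $C\subset X$ and factoring through the Jacobian of its normalization via \cite[Prop.~11.6.1]{BL2004} and \cite[Prop.~17.3]{polishchuk2003}---while your degree argument $\deg B=\deg A^\vee=\deg A$ simply makes explicit the step the paper compresses into the one-line assertion that $A$ is an isogeny iff $B$ is.
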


\begin{proof}
The homomorphism in Condition \ref{condition} is an isogeny if and only if
\begin{equation*}
\begin{pmatrix}
r & \psi_{\gamma} \\
\phi_{c_1} & \chi
\end{pmatrix} \in \End(X\times\widehat{X})
\end{equation*}
is an isogeny, and their composition is
\begin{equation*}
\begin{pmatrix}
\chi & -\psi_\gamma \\
-\phi_{c_1} & r
\end{pmatrix}
\begin{pmatrix}
r & \psi_{\gamma} \\
\phi_{c_1} & \chi
\end{pmatrix}
=
\begin{pmatrix}
\chi r - \psi_\gamma\phi_{c_1} & 0 \\
0 & \chi r - \phi_{c_1}\psi_{\gamma}
\end{pmatrix}.
\end{equation*}
Thus Condition \ref{condition} holds if and only if this diagonal matrix is an isogeny.
The two entries on the diagonal are duals of each other, so to prove the lemma it suffices to see that
\begin{equation}\label{eq:endomorphism}
\psi_\gamma\circ \phi_{c_1} = \alpha(\gamma,c_1).
\end{equation}

Both sides of \eqref{eq:endomorphism} are $\ZZ$-linear in $\gamma$.
Thus it suffices to treat the case where $\gamma$ is the class of an integral curve $C\subset X$.
Let $\nu\colon \widetilde{C}\to X$ be the normalization of $C$, and let $J$ be the Jacobian of $\widetilde{C}$.
There are induced Picard and Albanese maps $\rho\colon \widehat{X}\to J$ and $\sigma\colon J\to X$.

It is straight forward to verify (see proof of \cite[Prop.~11.6.1]{BL2004},
where the Morikawa--Matsusaka endomorphism is defined with opposite sign of
ours) that
\begin{equation*}
\alpha(\gamma, c_1) = -\sigma \circ \rho \circ \phi_{c_1}.
\end{equation*}
Now choose a line bundle $\sheaf{L}$ on $\widetilde{C}$ (taking $\OO_{\widetilde{C}}$ is ok).
By Grothendieck--Riemann--Roch for $\nu$, we have
\begin{equation*}
\big[\ch(\nu_*\sheaf{L})\big]_{g-1} = \gamma
\end{equation*}
where the subscript $g-1$ denotes the homogeneous component in $H^{2(g-1)}(X,\ZZ)$.
Thus the first Chern class of $\widehat{\nu_*\sheaf{L}}$ corresponds to $-\gamma$ via Poincar\'e duality \eqref{eq:PD},
so with $\sheaf{L}' = \det(\widehat{\nu_*\sheaf{L}})$, the homomorphism $\phi_{\sheaf{L}'}\colon \widehat{X}\to X$ coincides with $-\psi_\gamma$.
But now \cite[Prop.~17.3]{polishchuk2003}
\begin{equation*}
\phi_{\sheaf{L}'} = \sigma\circ \rho,
\end{equation*}
and we have established \eqref{eq:endomorphism}.
\end{proof}

\begin{remark}[{Matsusaka \cite[Proposition 1]{matsusaka59}}]\label{rem:alpha}
If $\gamma$ is proportional to $c_1^{g-1}$ in $H^{2(g-1)}(X,\QQ)$, then
$\alpha(\gamma,c_1)$ is multiplication by the integer $\deg(\gamma c_1)/g$.
So in this case the endomorphism in Lemma \ref{lemma:isogeny} is either zero
or an isogeny.
\end{remark}

\subsection{Diagonals and traces}\label{sec:diagtrace}

Return to the relative situation of an abelian scheme $X/S$.

\begin{lemma}\label{lem:fm-hom}
Let $\sheaf{E}$ and $\sheaf{F}$ be perfect complexes on $X$.
\begin{enumerate}[(i)]
\item There is a canonical isomorphism
\begin{equation*}
\pi_*\HHom(\sheaf{E},\sheaf{F}) \iso \widehat{\pi}_*\HHom(\widehat{\sheaf{E}},\widehat{\sheaf{F}})
\end{equation*}
of complexes in the derived category $D(S)$.\label{item:fm-hom}
\item The induced isomorphism
\begin{equation*}
\Ext_X^i(\sheaf{E},\sheaf{F}) \iso \Ext_{\widehat{X}}^i(\widehat{\sheaf{E}},\widehat{\sheaf{F}})
\end{equation*}
sends an element on the left, viewed as a derived category morphism
$\sheaf{E}\to\sheaf{F}[i]$, to its Fourier--Mukai transform
$\widehat{\sheaf{E}}\to\widehat{\sheaf{F}}[i]$.\label{item:fm-ext}
\end{enumerate}
\end{lemma}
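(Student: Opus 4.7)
The plan is to exploit that the relative Fourier--Mukai transform $\Phi\colon D(X)\to D(\widehat{X})$, $\sheaf{E}\mapsto\widehat{\sheaf{E}}$, is an $\OO_S$-linear equivalence of triangulated categories. The equivalence follows from Mukai's inversion theorem \eqref{eq:fourier}: its square is the auto-equivalence $\sheaf{E}\mapsto(-1)^*\sheaf{E}\tensor\omega_{X/S}[-g]$ (and symmetrically with the reverse Fourier--Mukai transform from $\widehat{X}$ to $X$), so two-sided quasi-inverses exist. The $\OO_S$-linearity $\Phi(\sheaf{E}\tensor\pi^*A)\iso\widehat{\sheaf{E}}\tensor\widehat{\pi}^*A$ for any $A\in D(S)$ is a direct consequence of the projection formula applied to $q\colon X\times_S\widehat{X}\to\widehat{X}$, using $p^*\pi^*=q^*\widehat{\pi}^*$.

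For (i), I would apply the Yoneda lemma in $D(S)$. For any test object $A\in D(S)$, the tensor--Hom and push--pull adjunctions identify
\[
\Hom_{D(S)}\bigl(A,\pi_*\HHom(\sheaf{E},\sheaf{F})\bigr) \iso \Hom_{D(X)}\bigl(\pi^*A\tensor\sheaf{E},\sheaf{F}\bigr).
\]
Since $\Phi$ is a fully faithful equivalence, this agrees with $\Hom_{D(\widehat{X})}(\Phi(\pi^*A\tensor\sheaf{E}),\widehat{\sheaf{F}})$, which by $\OO_S$-linearity of $\Phi$ equals $\Hom_{D(\widehat{X})}(\widehat{\pi}^*A\tensor\widehat{\sheaf{E}},\widehat{\sheaf{F}})$. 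Running the same adjunctions in reverse on the $\widehat{X}$-side identifies this with $\Hom_{D(S)}(A,\widehat{\pi}_*\HHom(\widehat{\sheaf{E}},\widehat{\sheaf{F}}))$. The composite is natural in $A$, so Yoneda produces a canonical isomorphism of complexes, proving (i).

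Part (ii) is then essentially automatic. Since $\Phi$ is an equivalence, it induces a bijection $\Hom_{D(X)}(\sheaf{E},\sheaf{F}[i])\iso\Hom_{D(\widehat{X})}(\widehat{\sheaf{E}},\widehat{\sheaf{F}}[i])$ sending $f$ to $\Phi(f)=\widehat{f}$. Specialising the chain in (i) to $A=\OO_S[-i]$ shows that this bijection coincides with the one induced on $\Ext^i$ by the isomorphism of (i).

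The main point requiring care is the naturality of each step, which is needed both to invoke Yoneda in (i) and to verify that the induced map on $\Ext^i$ really is $f\mapsto\widehat{f}$ as described in (ii). This amounts to standard adjunction calculus and the functoriality of $\Phi$ as a kernel transform; no substantive geometric input beyond Mukai inversion and the projection formula is involved.
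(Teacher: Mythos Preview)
Your argument is correct, and it takes a genuinely different route from the paper's. The paper constructs the isomorphism concretely: it writes down an explicit complex $\HHom(p_2^*\widehat{\sheaf{E}},p_1^*\sheaf{F})\tensor\sheaf{P}$ on $X\times_S\widehat{X}$, and then computes its push-forwards along the two projections. Along $p_2$ one gets $\HHom(\widehat{\sheaf{E}},\widehat{\sheaf{F}})$ by adjunction, and along $p_1$ one gets $\HHom(\sheaf{E},\sheaf{F})$ using relative (Grothendieck) duality together with Mukai inversion; pushing further to $S$ gives (i). Part (ii) is then obtained by tracing an element through the two adjunctions explicitly.

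Your approach packages the same content more abstractly: rather than building a bridging object, you use that $\Phi$ is an $\OO_S$-linear equivalence and run a Yoneda argument. This is shorter and applies verbatim to any $\OO_S$-linear Fourier--Mukai equivalence, not just the Poincar\'e transform. The paper's approach, on the other hand, produces an explicit ``kernel'' for the isomorphism on $X\times_S\widehat{X}$; this concreteness is what makes the later compatibility with relative duality (Remark~\ref{rem:symmetry}) a direct computation rather than a further adjunction chase. Either method suffices for the lemma as stated.
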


\begin{proof}
Consider the fibre diagram
\begin{equation*}
\begin{diagram}
X\times_S \widehat{X} & \rTo^{p_2} & \widehat{X} \\
\dTo^{p_1} & & \dTo^{\widehat{\pi}} \\
X & \rTo^\pi & S
\end{diagram}
\end{equation*}
and form the complex $\HHom(p_2^*\widehat{\sheaf{E}},p_1^*\sheaf{F})\tensor\sheaf{P}$ on $X\times_S\widehat{X}$.

Push forward to $\widehat{X}$ to get
\begin{align*}
p_{2*}(\HHom(p_2^*\widehat{\sheaf{E}},p_1^*\sheaf{F})\tensor\sheaf{P})
&\iso p_{2*}\HHom(p_2^*\widehat{\sheaf{E}},p_1^*\sheaf{F}\tensor\sheaf{P})\\
&\iso \HHom(\widehat{\sheaf{E}}, p_{2*}(p_1^*\sheaf{F}\tensor\sheaf{P}))\\
&\iso \HHom(\widehat{\sheaf{E}}, \widehat{\sheaf{F}})\\
\intertext{and push forward to $X$ to get, by relative duality over $p_1$,}
p_{1*}\HHom(p_2^*\widehat{\sheaf{E}},p_1^*\sheaf{F})\tensor\sheaf{P}
&\iso p_{1*}\HHom(p_2^*\sheaf{E}\tensor\sheaf{P}^\vee,p_1^*\sheaf{F})\\
&\iso \HHom(p_{1*}(p_2^*\widehat{\sheaf{E}}\tensor\sheaf{P}^\vee\tensor\omega_{p_1}[g]), \sheaf{F})\\
&\iso \HHom(\sheaf{E}, \sheaf{F})
\end{align*}
where $g$ is the relative dimension of $X/S$,
and where we used Fourier--Mukai inversion
in the last step.

Thus $\HHom(p_2^*\widehat{\sheaf{E}},p_1^*\sheaf{F})\tensor\sheaf{P}$
pushes forward to $\HHom(\sheaf{E},\sheaf{F})$ on $X$ and to
$\HHom(\widehat{\sheaf{E}},\widehat{\sheaf{F}})$ on $\widehat{X}$.
Push further down to $S$ to obtain (\ref{item:fm-hom}).

The induced map in (\ref{item:fm-ext})
now arises as a composition
\begin{equation*}
\Ext^i_X(\sheaf{E},\sheaf{F})
\iso
\Ext^i_{X\times_S\widehat{X}}(p_2^*\widehat{\sheaf{E}},p_1^*\sheaf{F}\tensor\sheaf{P})
\iso
\Ext^i_{\widehat{X}}(\widehat{\sheaf{E}},\widehat{\sheaf{F}}).
\end{equation*}
The first isomorphism takes $f\colon
\sheaf{E}\to\sheaf{F}[i]$ to
\begin{equation*}
p_2^*\widehat{\sheaf{E}} = p_2^*p_{2*}(p_1^*\sheaf{E}\tensor\sheaf{P})
\xrightarrow{\text{adj}}
p_1^*\sheaf{E}\tensor\sheaf{P}
\xrightarrow{p_1^*(f)\tensor\sheaf{P}}
(p_1^*\sheaf{F}\tensor\sheaf{P})[i]
\end{equation*}
and the second isomorphism takes $g\colon p_2^*\widehat{\sheaf{E}}\to p_1^*\sheaf{F}\tensor\sheaf{P}[i]$ to
\begin{equation*}
\widehat{\sheaf{E}} \xrightarrow{\text{adj}}
p_{2*}p_2^*\widehat{\sheaf{E}}
\xrightarrow{p_{2*}(g)}
p_{2*}(p_1^*\sheaf{F}\tensor\sheaf{P})[i] = \widehat{\sheaf{F}}[i].
\end{equation*}
The composition of these is the Fourier--Mukai transform.
\end{proof}

\begin{remark}\label{rem:symmetry}
The isomorphism
\begin{equation*}
\beta(\sheaf{E},\sheaf{F})\colon
\pi_*\HHom(\sheaf{E},\sheaf{F}) \to
\widehat{\pi}_*\HHom(\widehat{\sheaf{E}},\widehat{\sheaf{F}})
\end{equation*}
from Lemma \ref{lem:fm-hom}(\ref{item:fm-hom}) is compatible with relative duality:
the relative canonical sheaf $\omega_{X/S}$ has trivial fibres, and hence can be
written $\pi^*\eta$, where $\eta$ is the restriction of $\omega_{X/S}$ to the zero section.
On the dual side, the canonical sheaf $\omega_{\widehat{X}/S}$ is $\widehat{\pi}^*\eta$ for
the same $\eta$.
Thus relative duality gives the vertical arrows in the diagram of isomorphisms
\begin{equation*}
\begin{diagram}
\HHom(\widehat{\pi}_*\HHom(\widehat{\sheaf{E}},\widehat{\sheaf{F}}),\OO_S)
& \rTo^{\beta(\sheaf{E},\sheaf{F})^\vee} &
\HHom(\pi_*\HHom(\sheaf{E},\sheaf{F}),\OO_S) \\
\dTo & & \dTo \\
\widehat{\pi}_*\HHom(\widehat{\sheaf{F}},\widehat{\sheaf{E}})\tensor \eta[g]
& \rTo^{\beta(\sheaf{F},\sheaf{E})^{-1}\tensor\eta[g]} &
\pi_*\HHom(\sheaf{F},\sheaf{E})\tensor \eta[g]
\end{diagram}
\end{equation*}
and this diagram is commutative.
We leave out the (formal) verification of this statement.
\end{remark}

Let $\sheaf{E}$ be a perfect complex on the abelian scheme $X/S$, and let
\begin{align*}
\iota&\colon \pi_*\OO_X \to \pi_*\HHom(\sheaf{E},\sheaf{E}), &
\tr&\colon \pi_*\HHom(\sheaf{E},\sheaf{E}) \to \pi_*\OO_X
\end{align*}
denote the homomorphisms induced by the canonical ``diagonal'' map $\OO_X\to \HHom(\sheaf{E},\sheaf{E})$ and the trace map $\HHom(\sheaf{E},\sheaf{E})\to \OO_X$.
Let $\widehat{\iota}$ and $\widehat{\tr}$ denote the corresponding maps associated to $\widehat{\sheaf{E}}$ on $\widehat{X}$.

\begin{lemma}\label{lem:iota-trace}
Form the composition
\begin{equation*}
\widehat{\pi}_*\OO_{\widehat{X}} \xrightarrow{\widehat{\iota}} \widehat{\pi}_*\HHom(\widehat{\sheaf{E}},\widehat{\sheaf{E}}) \iso
\pi_*\HHom(\sheaf{E},\sheaf{E}) \xrightarrow{\tr} \pi_*\OO_X.
\end{equation*}
and take first cohomology sheaves.
The result coincides with the negative of the homomorphism
\begin{equation*}
d\phi_{\det(\sheaf{E})}\colon h^1\widehat{\pi}_*\OO_{\widehat{X}} \to h^1\pi_*\OO_X
\end{equation*}
induced by $\phi_{\det(\sheaf{E})}\colon X\to \widehat{X}$.
\end{lemma}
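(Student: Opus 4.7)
The plan is to interpret both the composition in the statement and $d\phi_{\det(\sheaf{E})}$ as tangent maps of canonical morphisms between abelian schemes, and to match them. Under the standard identifications $h^1\widehat{\pi}_*\OO_{\widehat{X}} = T_0 X$ and $h^1\pi_*\OO_X = T_0 \widehat{X}$, both sides become $\OO_S$-linear maps $T_0 X \to T_0 \widehat{X}$, so the claim amounts to identifying their geometric content.

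First I would verify that, at the level of $h^1$, the map $\widehat{\iota}$ is the Kodaira--Spencer map for the action of $\Pic^0(\widehat{X})$ on $\widehat{\sheaf{E}}$ by twisting: a class $\eta \in H^1(\widehat{X}, \OO_{\widehat{X}})$ corresponds to a first-order deformation $\sheaf{L}_\eta$ of $\OO_{\widehat{X}}$ over $\Spec k[\epsilon]$, and $\widehat{\iota}(\eta) \in \Ext^1(\widehat{\sheaf{E}}, \widehat{\sheaf{E}})$ is the class of the deformation $\widehat{\sheaf{E}} \otimes \sheaf{L}_\eta$ of $\widehat{\sheaf{E}}$. This is a formal consequence of the fact that the identity inclusion $\OO \into \HHom(-,-)$ carries an extension of $\OO$ by $\OO$ to the extension obtained by tensoring with $\widehat{\sheaf{E}}$.

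Next, I would use Lemma \ref{lem:fm-hom}(\ref{item:fm-ext}) to transport this class along the Fourier--Mukai equivalence. Under the canonical identification $\Pic^0(\widehat{X}) = X$ given by $x \mapsto \sheaf{P}_x$, tensor-by-$\sheaf{P}_x$ on the $\widehat{X}$ side corresponds under FM inversion to translation-by-$(-x)$ on the $X$ side (Mukai's exchange rule, already exhibited in the proof of Proposition \ref{prop:isotrivial}). Consequently, the isomorphism of Lemma \ref{lem:fm-hom} identifies $\widehat{\iota}(\eta) \in \Ext^1(\widehat{\sheaf{E}}, \widehat{\sheaf{E}})$ with the class in $\Ext^1(\sheaf{E}, \sheaf{E})$ of the translation family $T_{-x}^*\sheaf{E}$ along the tangent direction $\eta \in T_0 X$. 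The final step invokes the standard fact that $\tr$ on $\Ext^1(\sheaf{E}, \sheaf{E}) \to H^1(X, \OO_X)$ is the Kodaira--Spencer map for the determinant: if an Ext class is represented by a first-order deformation of $\sheaf{E}$, its trace is represented by the induced first-order deformation of $\det(\sheaf{E})$. Applied to the translation family, we obtain the family of line bundles $T_{-x}^*\det(\sheaf{E})$, whose class modulo $\det(\sheaf{E})$ is $\phi_{\det(\sheaf{E})}(-x)$; differentiating at $x = 0$ yields $-d\phi_{\det(\sheaf{E})}(\eta)$, producing the asserted sign.

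The main obstacle is to justify the two Kodaira--Spencer interpretations above with sufficient precision that they compose correctly with the Fourier--Mukai isomorphism of Lemma \ref{lem:fm-hom}. Both statements are standard Atiyah-class calculations, but tracking the sign conventions — in particular so that the minus sign emerges cleanly from the FM exchange rule and not from an accidental double cancellation with the $(-1)^*$ and shift in \eqref{eq:fourier} — will require care.
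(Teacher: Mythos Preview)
Your proposal is correct and follows essentially the same approach as the paper: interpret $h^1$ deformation-theoretically, use that $\widehat{\iota}$ corresponds to tensoring with a first-order line bundle deformation, transport via Fourier--Mukai to a translation deformation of $\sheaf{E}$, and apply $\tr = $ determinant to land on $\phi_{\det(\sheaf{E})}(-x)$. The paper makes the middle step explicit by computing the Pontryagin product $(F_*\OO_{S[\epsilon]})\star(q^*\sheaf{E}) = T_{-f}^*(q^*\sheaf{E})$ rather than citing the exchange rule as a black box, which is exactly the careful sign-tracking you flag as the main obstacle.
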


\begin{remark}\label{rem:tangent}
The notation $d\phi_{\det(\sheaf{E})}$ is a reminder of the following \cite[Theorem 8.4.1]{BLR90}:
if $\sigma\colon S\to X$ denotes the zero section,
then the restriction $\sigma^* T_{X/S}$
of the relative tangent bundle is canonically isomorphic to
$h^1\widehat{\pi}_*\OO_{\widehat{X}}$.
The induced homomorphism in the lemma can be viewed as the derivative
of $\phi_{\det(\sheaf{E})}$ along the zero section.
\end{remark}

\begin{proof}
The claim is local on $S$, so we may assume $S=\Spec R$ is affine.
Then the map in question is the composition of certain
($R$-module-) homomorphisms
\begin{equation}\label{eq:trace-circ-iota}
\Ext^1_{\widehat{X}}(\OO_{\widehat{X}}, \OO_{\widehat{X}})
\xrightarrow{\widehat{\iota}} \Ext^1_{\widehat{X}}(\widehat{\sheaf{E}},\widehat{\sheaf{E}})
\iso \Ext^1_X(\sheaf{E}, \sheaf{E})
\xrightarrow{\tr} \Ext^1_X(\OO_X, \OO_X).
\end{equation}
View elements of these $\Ext^1$-groups as first order infinitesimal deformations
of the argument, i.e.~families of perfect complexes over $S[\epsilon] = \Spec R[\epsilon]$,
where $\epsilon^2=0$. The key point in what follows is that the trace map sends a first order deformation
to its determinant.

Since $X$ is the dual of $\widehat{X}$,
first order deformations of $\OO_{\widehat{X}}$
correspond to morphisms $f\colon S[\epsilon] \to X$ extending
the zero section $\sigma\colon S\to X$.
In these terms, the homomorphism $d\phi_{\det(\sheaf{E})}$ in the Lemma sends
$f$ to $\phi_{\det(\sheaf{E})}\circ f$.
By definition of $\phi_{\det(\sheaf{E})}$, the corresponding deformation of $\OO_X$
is
\begin{equation}\label{eq:phi-star}
T_f^*(q^*\det(\sheaf{E})) \tensor q^*\det(\sheaf{E})^{-1} \in \Pic(X\times_S S[\epsilon])
\end{equation}
where $q$ is projection to $X$. We want to compare this with the image of $f$
through the string \eqref{eq:trace-circ-iota} of homomorphisms.

The Fourier--Mukai transform yields an isomorphism
\begin{equation*}
\Ext^1_X(\sigma_*\OO_S,\sigma_*\OO_S) \iso 
\Ext^1_{\widehat{X}}(\OO_{\widehat{X}}, \OO_{\widehat{X}}).
\end{equation*}
Let $F=(f,1)\colon S[\epsilon] \to X\times_S S[\epsilon]$.
Then the first order deformation of $\sigma_*\OO_S$ corresponding to $f$
is $F_*\OO_{S[\epsilon]}$.

Under $\widehat{\iota}$, the deformation $\widehat{F_*\OO_{S[\epsilon]}}$ is sent to the tensor product
\begin{equation*}
\widehat{F_*\OO_{S[\epsilon]}} \tensor \widehat{q^*\sheaf{E}} \in \Ext^1_X(\widehat{\sheaf{E}}, \widehat{\sheaf{E}}).
\end{equation*}
The Fourier--Mukai transform exchanges tensor product and Pontryagin product
(cf.~Mukai \cite[§3.7]{mukai87}, where the argument applies also relative to a base),
defined as $A\star B = m_*(p_1^*A \tensor p_2^*B)$.
Thus we arrive at
\begin{equation*}
(F_*\OO_{S[\epsilon]}) \star (q^*\sheaf{E}) \in \Ext^1_X(\sheaf{E},\sheaf{E}).
\end{equation*}
Using the shorthand $X[\epsilon] = X\times_S S[\epsilon]$
and with an eye at the commutative diagram
\begin{equation*}
\begin{diagram}[width=4em]
X[\epsilon] & \lTo^{p_2}_{\smash{\raisebox{-0.3ex}{$\displaystyle\widetilde{\phantom{--}}$}}} & S[\epsilon]\times_{S[\epsilon]} X[\epsilon] & \rTo^{p_1}   & S[\epsilon] \\
\dTo_{T_f} &&\dTo_{F\times 1}        &        & \dTo_F \\
X[\epsilon] &\lTo^m &X[\epsilon]\times_{S[\epsilon]} X[\epsilon] & \rTo^{p_1} & X[\epsilon]
\end{diagram}
\end{equation*}
we write out what this means:
\begin{align*}
(F_*\OO_{S[\epsilon]}) \star (q^*\sheaf{E})
&= m_*\left(p_1^*(F_*\OO_{S[\epsilon]})\tensor (p_2^*q^*\sheaf{E})\right) \\
&= m_*\left((F\times 1)_*(p_2^*q^*\sheaf{E})\right) \\
&= T_{f*}(q^*\sheaf{E}).
\end{align*}
Now $T_{f*} = T_{-f}^*$,
and the trace map applied to the first order deformation $T_{-f}^*(q^*\sheaf{E})$
yields its determinant, i.e.
\begin{equation*}
T_{-f}^*(q^*\det(\sheaf{E})) \tensor q^*\det(\sheaf{E})^{-1}.
\end{equation*}
This is, by the theorem of the square, the inverse to the invertible sheaf
\eqref{eq:phi-star}.
\end{proof}

\subsection{Splitting off traces}\label{sec:split}

To define Donaldson--Thomas invariants, we are going to construct
a perfect symmetric obstruction theory on $M(\sheaf{L}',\sheaf{L})$.
However, to obtain deformation invariance of the resulting
invariant, which is a key point, a relative version of the obstruction
theory is needed.
So we extend the setup from Section \ref{sec:chern} to the relative
situation of an abelian scheme $X/S$.

Fix integers $r$ and $\chi$ together with line bundles $\sheaf{L}$ on $X$
and $\sheaf{L}'$ on $\widehat{X}$. We say that a complex $\sheaf{E}$ on $X\times_S T$
has first Chern class $\sheaf{L}$ if $\phi_{\det(\sheaf{E})} = \phi_{\sheaf{L}}\times_S T$
as morphisms $X\times_S T \to \widehat{X}\times_S T$.

Let $M/S$ be a moduli space of perfect complexes on $X/S$ with rank $r$, first Chern class $\sheaf{L}$,
and with Fourier--Mukai transform of rank $\chi$ and first Chern class $\sheaf{L}'$.
Let $\sheaf{E}$ denote the universal family
(possibly defined only locally on $M$, as in Section \ref{sec:moduli}).
Recall that an isogeny of abelian schemes means a finite and surjective morphism
of group schemes. Such a morphism is necessarily flat \cite[Lemma 6.12]{mumford65}.

\begin{condition}\label{condition-relative}
Assume that
\begin{equation*}
\begin{pmatrix}
\chi & \phi_{\sheaf{L}'} \\
-\phi_{\sheaf{L}} & r
\end{pmatrix}
\in \End(X\times_S\widehat{X})
\end{equation*}
is an isogeny.
\end{condition}

When $S = \Spec \CC$, the endomorphism in Condition \ref{condition-relative}
agree with the one in Condition \ref{condition}.

\begin{proposition}\label{prop:condition}
If the endomorphism in Condition \ref{condition-relative} restricts to an isogeny
in some closed fibre $X_s\times \widehat{X}_s$, then it is an isogeny globally.
\end{proposition}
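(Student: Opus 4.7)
The plan is to show the stronger statement that for any homomorphism of abelian $S$-schemes of the same relative dimension, the property of being an isogeny is locally constant on the base. Combined with the connectedness of $S$ and the hypothesis at one closed fibre, this forces the property to hold over all of $S$.

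I would pick a relatively ample line bundle $\sheaf{N}$ on the projective abelian $S$-scheme $A := X \times_S \widehat{X}$, which exists because $X$ and $\widehat{X}$ are projective over $S$. Writing $\Phi$ for the endomorphism of Condition \ref{condition-relative}, the Hilbert polynomial
\begin{equation*}
P_s(n) \;=\; \chi\bigl(A_s,\,(\Phi_s^*\sheaf{N}_s)^{\otimes n}\bigr)
\end{equation*}
has coefficients locally constant in $s \in S$ by flatness of $A \to S$ together with proper base change; in particular, the leading coefficient $(\Phi_s^*\sheaf{N}_s)^{2g}/(2g)!$ is locally constant. Now $\Phi_s$ is a homomorphism between abelian varieties of the same dimension $2g$, and hence it is an isogeny if and only if $\Phi_s^*\sheaf{N}_s$ is ample, equivalently $(\Phi_s^*\sheaf{N}_s)^{2g}$ is strictly positive: when $\Phi_s$ fails to be surjective it factors through a proper abelian subvariety of $A_s$, so $\Phi_s^*\sheaf{N}_s$ is pulled back from a variety of dimension $<2g$ and has vanishing top self-intersection by the projection formula.

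Consequently the isogeny locus $\{s \in S : \Phi_s \text{ is an isogeny}\}$ is the non-vanishing locus of a locally constant $\ZZ$-valued function on $S$, hence clopen; by connectedness of $S$ and the hypothesis it equals $S$. Finally, fibrewise isogeny upgrades to $\Phi$ being an isogeny globally, since $\Phi$ is automatically proper as a morphism between proper $S$-schemes, fibrewise finiteness promotes to global finiteness, and fibrewise surjectivity to global surjectivity (flatness then being automatic by the Mumford result cited just before Condition \ref{condition-relative}). The main mild subtlety is the vanishing of the top self-intersection in the non-surjective case, which requires the projection formula applied to the factorisation of $\Phi_s$ through its image; once granted, the Hilbert-polynomial argument is routine.
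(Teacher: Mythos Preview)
Your argument is correct, but it takes a different route from the paper's. The paper argues directly that the isogeny locus in $S$ is both open and closed: open because it is the locus where $\Ker(\Phi)\to S$ has finite fibres (upper semicontinuity of fibre dimension for a proper morphism), and closed because it is the locus where $\Phi_s$ is surjective, which is $\{s:\dim\Phi(A)_s\ge 2g\}$, again by upper semicontinuity of fibre dimension applied to the closed image $\Phi(A)\subset A$. You instead detect the isogeny locus numerically, via the top self-intersection of $\Phi^*\sheaf{N}$ for a relatively ample $\sheaf{N}$: this number is locally constant by flatness and equals $\deg(\Phi_s)\cdot\sheaf{N}_s^{2g}$ when $\Phi_s$ is an isogeny and $0$ otherwise. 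Both approaches immediately generalize to arbitrary homomorphisms of abelian schemes of the same relative dimension. The paper's argument is shorter and purely topological; yours has the minor advantage of simultaneously showing that the \emph{degree} of the isogeny is locally constant, which the paper needs later anyway (in Corollary~\ref{cor:deform}) and obtains there by a separate appeal to flatness of the kernel.
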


\begin{proof}
Let $f$ denote the endomorphism in Condition \ref{condition-relative}.
The locus of points $s\in S$ such that $f_s$ is an isogeny is open (as
the locus where $\Ker(f)\to S$ is finite)
and closed (as the locus over which $f_s$ is surjective),
and nonempty by assumption. Thus $f_s$ is an isogeny for all $s\in S$.
So $f$ is finite and surjective, i.e.~an isogeny of abelian schemes.
\end{proof}

Lemma \ref{lem:fm-hom} applies to the projection $p_2\colon X\times_S M \to M$,
which itself is an abelian scheme, with dual $p_2\colon \widehat{X}\times_S M\to M$.
Thus we identify
$p_{2*}\HHom(\sheaf{E},\sheaf{E})=p_{2*}\HHom(\widehat{\sheaf{E}},\widehat{\sheaf{E}})$,
and then there are canonical (co)diagonal and (co)trace maps
\begin{align*}
&\iota\colon p_{2*}\OO_{X\times_S M} \to p_{2*}\HHom(\sheaf{E},\sheaf{E}),
&
&\tr\colon p_{2*}\HHom(\sheaf{E},\sheaf{E}) \to p_{2*}\OO_{X\times_S M},\\
&\widehat{\iota}\colon p_{2*}\OO_{\widehat{X}\times_S M} \to p_{2*}\HHom(\sheaf{E},\sheaf{E}),
&
&\widehat{\tr}\colon p_{2*}\HHom(\sheaf{E},\sheaf{E}) \to p_{2*}\OO_{\widehat{X}\times_S M}.
\end{align*}

Form the composition $(\widehat{\tr},\tr)\circ(\widehat{\iota}+\iota)$ and
take cohomology sheaves. Since the cohomology sheaves of the complex $p_{2*}\OO_{X\times_S M}$
are $h^i(\pi_*\OO_X) \tensor_{\OO_S}\OO_M$, and similarly on the dual side, the result can be written
\begin{equation}\label{eq:composition}
\big(h^i(\widehat{\pi}_*\OO_{\widehat{X}})\oplus h^i(\pi_*\OO_X)\big) \tensor_{\OO_S} \OO_M
\to
\EExt^i_{p_2}(\sheaf{E},\sheaf{E})\\
\to
\big(h^i(\widehat{\pi}_*\OO_{\widehat{X}})\oplus h^i(\pi_*\OO_X)\big) \tensor_{\OO_S} \OO_M
\end{equation}

For $i=1$, view $h^1(\widehat{\pi}_*\OO_{\widehat{X}})\oplus h^1(\pi_*\OO_X)$ in \eqref{eq:composition}
as the relative tangent space to $X\times_S\widehat{X}$ along the zero section, as in Remark \ref{rem:tangent}.

\begin{lemma}\label{lem:composition}
For $i=1$, the composition \eqref{eq:composition} is
the endomorphism
\begin{equation}\label{eq:derivative}
\begin{pmatrix}
\chi & d\phi_{\sheaf{L}'} \\
-d\phi_{\sheaf{L}} & r
\end{pmatrix},
\end{equation}
of $h^1(\widehat{\pi}_*\OO_{\widehat{X}})\oplus h^1(\pi_*\OO_X)$, tensored with $\OO_M$.
\end{lemma}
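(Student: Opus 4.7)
The plan is to split $(\widehat{\tr},\tr)\circ(\widehat{\iota}+\iota)$ into its four summands $\widehat{\tr}\circ\widehat{\iota}$, $\widehat{\tr}\circ\iota$, $\tr\circ\widehat{\iota}$, $\tr\circ\iota$, and identify each with the corresponding entry of the matrix \eqref{eq:derivative} under the decomposition $h^1\widehat{\pi}_*\OO_{\widehat{X}}\oplus h^1\pi_*\OO_X$ of source and target. The two ``pure'' entries live on the diagonal and the two ``mixed'' entries live off the diagonal.

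The diagonal entries are handled by the standard identity that for any perfect complex $\sheaf{F}$, the composition $\OO\xrightarrow{\text{diag}}\HHom(\sheaf{F},\sheaf{F})\xrightarrow{\tr}\OO$ equals multiplication by $\mathrm{rk}(\sheaf{F})$ in $K$-theory, hence as an endomorphism in the derived category. Pushing forward to $M$, this makes $\tr\circ\iota$ equal to $r\cdot\mathrm{id}$ (rank of $\sheaf{E}$) and $\widehat{\tr}\circ\widehat{\iota}$ equal to $\chi\cdot\mathrm{id}$ (rank of $\widehat{\sheaf{E}}$). The mixed entry $\tr\circ\widehat{\iota}$ is precisely the composition treated in Lemma \ref{lem:iota-trace}, applied relatively to the abelian scheme $X\times_S M/M$; the lemma's proof is local on the base, so this is immediate. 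Since the universal complex is normalised so that $\phi_{\det(\sheaf{E})}=\phi_{\sheaf{L}}\times_S M$, the $h^1$-map is $-d\phi_{\sheaf{L}}$, matching the bottom-left entry.

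For the remaining mixed composition $\widehat{\tr}\circ\iota$, the natural approach is to apply Lemma \ref{lem:iota-trace} with the roles of $(X,\sheaf{E})$ and $(\widehat{X},\widehat{\sheaf{E}})$ interchanged, so that the dual rank becomes $\det(\widehat{\sheaf{E}})=\sheaf{L}'$. This requires identifying the Fourier--Mukai isomorphism $\beta(\sheaf{E},\sheaf{E})$ of Lemma \ref{lem:fm-hom} with its double-transform analogue $\beta(\widehat{\sheaf{E}},\widehat{\sheaf{E}})$ via Fourier inversion \eqref{eq:fourier}, which drags in the sign automorphism $(-1)^*$; note that $\omega_{X/S}$ is pulled back from $S$ and so does not disturb the diagonal or trace. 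Since $(-1)^*$ acts as $-1$ on $h^1\pi_*\OO_X$, the naive output $-d\phi_{\det(\widehat{\sheaf{E}})} = -d\phi_{\sheaf{L}'}$ acquires an extra sign, producing the positive $d\phi_{\sheaf{L}'}$ in the upper-right entry. The main obstacle is precisely this sign bookkeeping: verifying that $\beta(\widehat{\sheaf{E}},\widehat{\sheaf{E}})$ and $\beta(\sheaf{E},\sheaf{E})^{-1}$ differ only by the action of $(-1)^*$. A calculation analogous to Remark \ref{rem:symmetry} --- which records an analogous compatibility of $\beta(\sheaf{E},\sheaf{F})$ with relative duality --- should supply exactly the required $(-1)^*$-twist and pin down the sign once and for all.
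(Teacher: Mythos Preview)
Your proposal is correct and follows essentially the same route as the paper: identify the four entries separately, use $\tr\circ\iota=\mathrm{rk}$ for the diagonal, invoke Lemma~\ref{lem:iota-trace} for $\tr\circ\widehat{\iota}$, and swap the roles of $(X,\sheaf{E})$ and $(\widehat{X},\widehat{\sheaf{E}})$ for the remaining entry, with the sign flip coming from the $(-1)^*$ in Fourier inversion~\eqref{eq:fourier}. The paper's proof is terser on the last point---it simply asserts the sign change---whereas you correctly isolate the content as a comparison of $\beta(\widehat{\sheaf{E}},\widehat{\sheaf{E}})$ with $\beta(\sheaf{E},\sheaf{E})^{-1}$; but the underlying argument is the same.
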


\begin{proof}
The bottom right entry is $h^1 p_{2*}$ applied to
\begin{equation*}
\OO_{X\times_S M} \xrightarrow{\iota} \HHom(\sheaf{E},\sheaf{E})
\xrightarrow{\tr} \OO_{X\times_S M},
\end{equation*}
but this is multiplication by the rank $r$ of $\sheaf{E}$ already
before the application of the derived push forward. Similarly,
the top left entry is multiplication by the rank of
$\widehat{\sheaf{E}}$.

The bottom left entry is $-d\phi_{\det(\sheaf{E})}$ by Lemma
\ref{lem:iota-trace}, and $\phi_{\det(\sheaf{E})} = \phi_{\sheaf{L}}\times_S M$.
Exchanging the roles of $(X,\sheaf{E})$ and $(\widehat{X},\widehat{\sheaf{E}})$, we see that
the top right entry is $d\phi_{\det(\widehat{\sheaf{E}})} = d(\phi_{\sheaf{L}'}\times_S M)$.
(The sign
change comes from the $(-1)^*$ in Mukai's isomorphism \eqref{eq:fourier}.)
\end{proof}

\begin{lemma}\label{lem:split}
Assume $X/S$ has dimension $g=3$ and Condition \ref{condition-relative} holds.
Then the truncation
\begin{equation*}
\tau^{[1,2]}\big(p_{2*}\HHom(\sheaf{E},\sheaf{E})\big)
\to
\tau^{[1,2]}\big(p_{2*}\OO_{\widehat{X}\times_S M}\oplus p_{2*}\OO_{X\times_S M}\big)
\end{equation*}
of $(\widehat{\tr},\tr)$
is a split epimorphism.
\end{lemma}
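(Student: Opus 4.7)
Set $A = p_{2*}\HHom(\sheaf{E},\sheaf{E})$ and $B = p_{2*}\OO_{\widehat{X}\times_S M}\oplus p_{2*}\OO_{X\times_S M}$. My plan is to exhibit the composition
\[
\phi = (\widehat{\tr},\tr)\circ(\widehat{\iota}+\iota)\colon B\to A\to B
\]
as a quasi-isomorphism after applying $\tau^{[1,2]}$. Granted this, the composite $\tau^{[1,2]}(\widehat{\iota}+\iota)\circ(\tau^{[1,2]}\phi)^{-1}$ is a section of $\tau^{[1,2]}(\widehat{\tr},\tr)$, delivering the splitting.

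A complex concentrated in cohomological degrees $1$ and $2$ is quasi-isomorphic to a two-term complex in those degrees, so a morphism between two such complexes is a quasi-isomorphism iff it induces isomorphisms on $h^1$ and $h^2$. So it suffices to show that $h^1\phi$ and $h^2\phi$ are isomorphisms of sheaves on $M$.

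For $h^1$, Lemma \ref{lem:composition} identifies $h^1\phi$ with the endomorphism \eqref{eq:derivative}, tensored with $\OO_M$, acting on the relative tangent space of $X\times_S\widehat{X}$ along the zero section (see Remark \ref{rem:tangent}). By Condition \ref{condition-relative} this matrix represents an isogeny. Since $k$ has characteristic zero, the kernel of an isogeny is finite \'etale, so the derivative at the zero section is an isomorphism; hence $h^1\phi$ is an isomorphism.

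For $h^2$, I appeal to relative duality for $X/S$ (here $g=3$). The trace and diagonal maps on $\HHom(\sheaf{E},\sheaf{E})$ are Serre dual, and Remark \ref{rem:symmetry} precisely ensures that the Fourier--Mukai identification used to define $\widehat{\iota}$ and $\widehat{\tr}$ is compatible with relative duality. Through the canonical isomorphisms $h^2\pi_*\OO_X\cong(h^1\pi_*\OO_X)^\vee\otimes\eta$ (and its analogue on $\widehat{X}$) arising from $\omega_{X/S}=\pi^*\eta$, the map $h^2\phi$ is then the $\OO_S$-linear dual of $h^1\phi$ twisted by $\eta$, and so is again an isomorphism. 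The bulk of the work is in this last step: one must keep track of the four matrix entries of $h^2\phi$---two scalars ($r$ and $\chi$) and two maps arising from $d\phi_{\sheaf{L}}$ and $d\phi_{\sheaf{L}'}$---and verify, with correct signs, that they assemble into the transpose of \eqref{eq:derivative}. This sign bookkeeping, funnelled through Remark \ref{rem:symmetry}, is the main technical obstacle I anticipate.
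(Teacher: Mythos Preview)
Your proposal is correct and follows essentially the same route as the paper's proof: show that $\tau^{[1,2]}$ of the composition $(\widehat{\tr},\tr)\circ(\widehat{\iota}+\iota)$ is an automorphism by checking $h^1$ (via Lemma~\ref{lem:composition} and Condition~\ref{condition-relative}) and $h^{g-1}=h^2$ (via relative duality and Remark~\ref{rem:symmetry}). The paper makes the duality step a touch more explicit---it computes that the dual of $(\widehat{\tr},\tr)\circ(\widehat{\iota}+\iota)$ is $(\tr,\widehat{\tr})\circ(\iota+\widehat{\iota})\otimes\eta[g]$, so knowing the $h^1$ case immediately yields the $h^{g-1}$ case---which is exactly the sign bookkeeping you flag.
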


\begin{proof}
It suffices to show that the truncation of
$(\widehat{\tr},\tr)\circ(\widehat{\iota}+\iota)$
to $[1,2]$ is an automorphism of
$\tau^{[1,2]}\big(p_{2*}\OO_{\widehat{X}\times_S M}\oplus p_{2*}\OO_{X\times_S M}\big)$
in the derived category.
In fact we claim: for $X/S$ of arbitrary dimension $g$,
the composition \eqref{eq:composition} is an isomorphism
for $i=1$ and $i=g-1$.

For $i=1$, the claim follows from Lemma \ref{lem:composition},
since Condition \ref{condition-relative} implies that 
\eqref{eq:derivative} is an automorphism of $h^1(\widehat{\pi}_*\OO_{\widehat{X}})\oplus h^1(\pi_*\OO_X)$.

The case $i=g-1$ follows from duality:
the complexes $\OO_{X\times_S M}$, $\OO_{\widehat{X}\times_S M}$
$\HHom(\sheaf{E},\sheaf{E})$ and $\HHom(\widehat{\sheaf{E}},\widehat{\sheaf{E}})$
are all self dual in the derived sense,
and the duals of the diagonal maps
$\iota$ and $\widehat{\iota}$,
upstairs on $X\times_S M$ and $\widehat{X}\times_S M$,
are canonically the trace maps $\tr$ and $\widehat{\tr}$.
Thus, by relative duality over $M$ and Remark \ref{rem:symmetry}, the dual of
\begin{equation*}
p_{2*}\OO_{\widehat{X}\times_S M}
\xrightarrow{\widehat{\iota}}
p_{2*}\HHom(\widehat{\sheaf{E}},\widehat{\sheaf{E}})
\overset{\beta}{\iso}
p_{2*}\HHom(\sheaf{E},\sheaf{E})
\xrightarrow{\tr}
p_{2*}\OO_{X\times_S M}
\end{equation*}
is, canonically,
\begin{equation*}
p_{2*}\OO_{\widehat{X}\times M}[g]
\xleftarrow{\widehat{\tr}[g]}
p_{2*}\HHom(\widehat{\sheaf{E}},\widehat{\sheaf{E}})[g]
\overset{\beta[g]}{\iso}
p_{2*}\HHom(\sheaf{E},\sheaf{E})[g]
\xleftarrow{\iota[g]}
p_{2*}\OO_{X\times M}[g]
\end{equation*}
tensored with a line bundle $\eta$ (as in Remark \ref{rem:symmetry}).
So, suppressing $\beta$ again, the dual of $\tr\circ\widehat{\iota}$
is $(\widehat{\tr}\circ\iota)\tensor \eta[g]$.
It follows that the dual of $(\widehat{\tr},\tr)\circ(\widehat{\iota}+\iota)$
is $(\tr,\widehat{\tr})\circ(\iota+\widehat{\iota})\tensor\eta[g]$.
Knowing that the first cohomology of
$(\widehat{\tr},\tr)\circ(\widehat{\iota}+\iota)$
is an isomorphism,
we conclude that its $g-1$'st cohomology is
an isomorphism, too.
\end{proof}

\begin{remark}
By working with moduli spaces for complexes with fixed determinant, and
the corresponding trace free obstruction theory, we only get rid of one of the trivial summands
in Lemma \ref{lem:split}, and the associated virtual fundamental
class is still zero. So in order to get a nontrivial invariant, we will fix
the codeterminant, too.
\end{remark}

\subsection{The obstruction theory}\label{sec:obstr}

Let $M(\sheaf{L}',\sheaf{L}) \subset M$ denote the
sub moduli space parametrizing complexes with fixed determinant $\sheaf{L}$
and codeterminant $\sheaf{L}'$, i.e. the fibre product
\begin{equation*}
\begin{diagram}
M(\sheaf{L}',\sheaf{L}) & \rTo & M \\
\dTo & & \dTo^\delta\\
S & \rTo & \Pic(\widehat{X}/S)\times_S\Pic(X/S)
\end{diagram}
\end{equation*}
where the lowermost horizontal arrow is defined by $(\sheaf{L}',\sheaf{L})$,
and $\delta$ is defined by $(\det(\widehat{\sheaf{E}}),\det(\sheaf{E}))$.

Recall that Behrend--Fantechi \cite{BF2008} say that a perfect obstruction theory
$\sheaf{F} \to L_M$ over $k$ is \emph{symmetric}
if it is equipped with an isomorphism $\theta\colon \sheaf{F}\to \sheaf{F}^\vee[1]$ satisfying
$\theta^\vee[1] = \theta$. We extend the definition to the relative situation
by working modulo line bundles from the base.

\begin{definition}\label{def:symmetry}
A relative perfect obstruction theory
$\sheaf{F}\to L_{M/S}$ is \emph{symmetric} if there is a line bundle $\eta$ on $S$ and an 
isomorphism $\theta\colon \sheaf{F} \to \sheaf{F}^\vee\tensor_{\OO_S} \eta[1]$ satisfying
$\theta^\vee[1]\tensor_{\OO_S} \eta = \theta$.
\end{definition}

Let $\sheaf{F}$ denote the kernel of the split epimorphism 
in Lemma \ref{lem:split}, so that
\begin{equation}\label{eq:split}
\tau^{[1,2]}p_{2*}\HHom(\sheaf{E},\sheaf{E}) \iso
\sheaf{F} \oplus \tau^{[1,2]}\left(p_{2*}\OO_{\widehat{X}\times_S M} \oplus p_{2*}\OO_{X\times_S M}\right).
\end{equation}

\begin{theorem}\label{thm:obstr}
Assume Condition \ref{condition-relative}.
The morphism \eqref{eq:obstr-morphism} induces a morphism
\begin{equation*}
\sheaf{F}^\vee[-1] \to L_{M/S},
\end{equation*}
whose restriction to $M(\sheaf{L}',\sheaf{L})$
is a relative perfect symmetric obstruction theory.
\end{theorem}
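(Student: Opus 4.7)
The plan is to define the morphism $\sheaf{F}^\vee[-1]\to L_{M/S}$ by combining the Atiyah class map \eqref{eq:obstr-morphism} with the splitting of Lemma \ref{lem:split}, and then to verify obstruction theory, perfectness, and symmetry in turn. For the construction: since $L_{M/S}$ is supported in degrees $[-1,0]$, \eqref{eq:obstr-morphism} factors through a morphism $\bigl(\tau^{[1,2]}p_{2*}\HHom(\sheaf{E},\sheaf{E})\bigr)^\vee[-1]\to L_{M/S}$; dualizing the splitting \eqref{eq:split} and including the $\sheaf{F}^\vee[-1]$ summand then gives the desired $\phi\colon\sheaf{F}^\vee[-1]\to L_{M/S}$. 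Perfectness in degrees $[-1,0]$ is automatic because $\sheaf{F}$ is perfect and concentrated in degrees $[1,2]$.

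To show $\phi|_{M(\sheaf{L}',\sheaf{L})}$ is an obstruction theory, I would adapt the argument of Huybrechts--Thomas \cite[Theorem 4.1]{HT2010}, which proves the analogous statement in the fixed-determinant case. Their key step identifies the map from the trace summand, via \eqref{eq:obstr-morphism}, with the pullback $\delta^*L_{\Pic(X/S)/S}\to L_{M/S}$ induced by $\det\colon M\to \Pic(X/S)$. Lemma \ref{lem:iota-trace}, applied both on $X$ (via $\tr\circ\widehat{\iota}$) and on $\widehat{X}$ (via $\widehat{\tr}\circ\iota$, using Lemma \ref{lem:fm-hom}), furnishes the analogous identification for the trace summand
\begin{equation*}
\tau^{[1,2]}\bigl(p_{2*}\OO_{\widehat{X}\times_S M}\oplus p_{2*}\OO_{X\times_S M}\bigr)^\vee[-1]\to L_{M/S},
\end{equation*}
matching it with $\delta^* L_{\Pic(\widehat{X}/S)\times_S\Pic(X/S)/S}\to L_{M/S}$; the identification of cotangent spaces of $\Pic$ with $h^1$ of the structure sheaves is Remark \ref{rem:tangent}. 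A diagram chase on the cotangent complex triangle attached to the Cartesian square defining $M(\sheaf{L}',\sheaf{L})$ then transfers the obstruction theory property from $M$ to the fibre: on $M(\sheaf{L}',\sheaf{L})$, the $\delta^*L_{\Pic\times\Pic/S}$ part dies in $L_{M(\sheaf{L}',\sheaf{L})/S}$, leaving $\phi|_{M(\sheaf{L}',\sheaf{L})}$ as an obstruction theory. Condition \ref{condition-relative} enters through Lemma \ref{lem:split} to ensure the splitting actually exists.

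For symmetry, the key input is the self-duality $\bigl(p_{2*}\HHom(\sheaf{E},\sheaf{E})\bigr)^\vee\iso p_{2*}\HHom(\sheaf{E},\sheaf{E})\tensor\pi_M^*\eta[3]$, obtained from relative Grothendieck--Verdier duality along $p_2$, self-duality of $\HHom(\sheaf{E},\sheaf{E})$, and the trivialization $\omega_{X/S}\iso\pi^*\eta$. By Remark \ref{rem:symmetry}, this duality interchanges $\iota$ with $\widehat{\tr}$ and $\widehat{\iota}$ with $\tr$ (up to the twist), so the splitting \eqref{eq:split} is self-dual up to $\pi_M^*\eta[3]$, and hence $\sheaf{F}^\vee\iso\sheaf{F}\tensor\pi_M^*\eta[3]$. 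Shifting by $[-1]$ yields the isomorphism $\theta\colon\sheaf{F}^\vee[-1]\iso(\sheaf{F}^\vee[-1])^\vee\tensor\pi_M^*\eta[1]$ required by Definition \ref{def:symmetry}; the compatibility $\theta^\vee[1]\tensor\eta=\theta$ follows from the symmetry of the composition pairing underlying relative duality.

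The main obstacle will be the second step: arranging the two-sided trace splitting to interact cleanly with the cotangent complex triangle of the double fibration $M(\sheaf{L}',\sheaf{L})\to M\to\Pic(\widehat{X}/S)\times_S\Pic(X/S)$. The lemmas of Section \ref{sec:diagtrace} were designed to package exactly the Fourier--Mukai compatible duality and trace identification needed to make this diagram chase go through.
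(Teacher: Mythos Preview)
Your construction of the morphism $\sheaf{F}^\vee[-1]\to L_{M/S}$ and your symmetry argument are essentially those of the paper, though one detail is misstated: relative duality exchanges $\iota$ with $\tr$ and $\widehat\iota$ with $\widehat{\tr}$, not $\iota$ with $\widehat{\tr}$. The compatibility of the Fourier--Mukai identification $\beta$ with duality (Remark~\ref{rem:symmetry}) is what makes the cross-terms behave, as in the proof of Lemma~\ref{lem:split}. Your conclusion that the splitting is self-dual and $\sheaf{F}^\vee\iso\sheaf{F}\tensor\eta[3]$ is nonetheless correct.

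For the obstruction theory step your route diverges from the paper's, and as written it has a gap. The paper does not pass through the cotangent complex triangle of $M(\sheaf{L}',\sheaf{L})\to M\to \Pic\times_S\Pic$; instead it works directly with deformation situations $T\subset\overline{T}$ and decomposes the Huybrechts--Thomas obstruction class $\omega\in H^2(\HHom(\sheaf{E}_T,\sheaf{E}_T)\tensor\sheaf{I})$ along the splitting \eqref{eq:split} as $(\omega_0,\omega_1,\omega_2)$, where $\omega_1,\omega_2\in H^2(\sheaf{I})$ are the obstructions to deforming $\det(\sheaf{E}_T)$ and $\det(\widehat{\sheaf{E}}_T)$. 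These vanish because line bundles deform freely in characteristic zero, so $\omega_0$ alone governs obstructions; a parallel argument handles the torsor structure on extensions.

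Your proposed identification of the trace summand with $\delta^*L_{\Pic(\widehat{X}/S)\times_S\Pic(X/S)/S}$ cannot hold at the complex level: since $\Pic\times_S\Pic$ is smooth over $S$, its cotangent complex is concentrated in degree $0$, whereas $\bigl(\tau^{[1,2]}(p_{2*}\OO_{\widehat{X}\times_S M}\oplus p_{2*}\OO_{X\times_S M})\bigr)^\vee[-1]$ has nonzero $h^{-1}$, namely the dual of $h^2\widehat{\pi}_*\OO_{\widehat{X}}\oplus h^2\pi_*\OO_X$ pulled back to $M$. Lemma~\ref{lem:iota-trace} and Remark~\ref{rem:tangent} only match the $h^1$ (i.e.\ $h^0$ after dualizing) pieces. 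This extra $h^{-1}$ is exactly the obstruction space for line bundle deformations, and your diagram chase cannot dispose of it without invoking the same characteristic-zero unobstructedness the paper uses --- a step your proposal never mentions. Once you supply that input you are effectively back to the paper's direct argument.
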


\begin{proof}
The arguments of Huybrechts--Thomas \cite[Theorem 4.1 and Section 4.4]{HT2010}
work with minor additions.
Simplicity of $\sheaf{E}$ combined with Serre duality shows that
the diagonal and trace maps induce isomorphisms
\begin{gather*}
\OO_M = h^0p_{2*}\OO_{X\times_S M} \isoto h^0 p_{2*}\HHom(\sheaf{E},\sheaf{E}),\\
h^3p_{2*}\HHom(\sheaf{E},\sheaf{E}) \isoto h^3 p_{2*}\OO_{X\times_S M}
\iso \eta^\vee\tensor_{\OO_S}\OO_M
\end{gather*}
(the last isomorphism by relative duality for $\pi$; as before $\eta$ is the line bundle
for which $\omega_{X/S} = \pi^*\eta$)
and thus there are distinguished triangles
\begin{gather}\label{eq:triangles}
\OO_M \to p_{2*}\HHom(\sheaf{E},\sheaf{E}) \to \tau^{\ge 1}p_{2*}\HHom(\sheaf{E},\sheaf{E}),
\notag\\
\tau^{[1,2]}p_{2*}\HHom(\sheaf{E},\sheaf{E})
 \to \tau^{\ge 1}p_{2*}\HHom(\sheaf{E},\sheaf{E}) \to \eta^\vee\tensor_{\OO_S}\OO_M[-3].
\end{gather}

By these triangles (and the fact that $L_{M/S}$ is,
by definition, concentrated in degrees $[-1,0]$),
the morphism \eqref{eq:obstr-morphism} induces
\begin{equation}\label{eq:trunc-obstr}
(\tau^{[1,2]}p_{2*}\HHom(\sheaf{E},\sheaf{E}))^\vee[-1] \to L_{M/S}
\end{equation}
and hence, by the direct sum decomposition \eqref{eq:split},
\begin{equation}\label{eq:my-obstr}
\sheaf{F}^\vee[-1] \to L_{M/S}.
\end{equation}

For every deformation situation over $S$,
\begin{equation*}
f\colon T \to M,\quad T \subset \overline{T}
\end{equation*}
where $T\subset \overline{T}$ is an affine square zero thickening with
ideal $\sheaf{I}\subset \OO_{\overline{T}}$, the map \eqref{eq:trunc-obstr}
gives rise to a class
\begin{equation*}
\omega\in \Ext^2(f^*(\tau^{[1,2]}p_{2*}\HHom(\sheaf{E},\sheaf{E}))^\vee, \sheaf{I})
\iso H^2(\HHom(\sheaf{E}_T,\sheaf{E}_T)\tensor_{\OO_T} \sheaf{I})
\end{equation*}
which is in fact an obstruction class \cite[Main theorem]{HT2010}, i.e., it vanishes if and only if
$f$ extends to $\overline{T}$.
Now $\omega$ decomposes as
\begin{equation*}
\omega = (\omega_0,\omega_1,\omega_2)
\in
H^2(f^*\sheaf{F}\tensor\sheaf{I}) \oplus H^2(\sheaf{I}) \oplus H^2(\sheaf{I}).
\end{equation*}
Here,
\begin{equation*}
\omega_0 \in \Ext^2(f^*(\sheaf{F}^\vee), \sheaf{I})
\iso
H^2(f^*\sheaf{F}\tensor\sheaf{I})
\end{equation*}
is the class induced by \eqref{eq:my-obstr}, and
$\omega_1$ and $\omega_2$ are the obstructions to deforming
$\det(\sheaf{E}_T)$ and $\det(\widehat{\sheaf{E}}_T)$, as
the trace map sends the obstruction class of a perfect complex
to the obstruction class of its determinant.
Since line bundles deform freely in characteristic zero, $\omega_1$ and $\omega_2$ vanish,
and so $\omega_0$ vanishes if and only $\omega$ does.
Thus $\omega_0$ is an obstruction class.
If in addition $\det(\sheaf{E}_T)$ is constant (in the sense of being a pullback from the base),
then the set of extensions $\overline{T} \to M$ of $f$ with
constant determinant is a torsor under the natural action of
the trace free part of
\begin{equation*}
\Ext^1(f^*(\tau^{[1,2]}p_{2*}\HHom(\sheaf{E},\sheaf{E}))^\vee, \sheaf{I})
\iso H^1(\HHom(\sheaf{E}_T,\sheaf{E}_T)\tensor_{\OO_T} \sheaf{I}).
\end{equation*}
Thus, if $f$ factors through $M(\sheaf{L}',\sheaf{L})$, so that
both $\det(\sheaf{E}_T)$ and $\det(\widehat{\sheaf{E}}_T)$ are constant,
then extensions $\overline{T}\to M(\sheaf{L}',\sheaf{L})$ of $f$ form a torsor under
the trace- and cotrace free part of $H^1(\HHom(\sheaf{E}_T,\sheaf{E}_T)\tensor_{\OO_T} \sheaf{I})$,
which is $H^1(f^*\sheaf{F}\tensor\sheaf{I})$.
Thus the restriction of \eqref{eq:my-obstr} to $M(\sheaf{L}',\sheaf{L})$ is a relative obstruction theory.

Whenever two terms in a distinguished triangle is perfect,
so is the third, and thus $\sheaf{F}$ is perfect.
For every point $i\colon \Spec k \to M$,
the vector space $h^p(i^*\sheaf{F})$ is the trace- and cotrace free part
of $\Ext^p(i^*\sheaf{E},i^*\sheaf{E})$, and in particular vanishes for $p\not\in [1,2]$.
Thus, by an application of Nakayama's Lemma,
the perfect complex $\sheaf{F}$ has amplitude $\subseteq [1,2]$.
By the two distinguished triangles \eqref{eq:triangles}, it follows that relative duality
\begin{equation*}
(p_{2*}\HHom(\sheaf{E},\sheaf{E}))^\vee \iso
p_{2*}\HHom(\sheaf{E},\sheaf{E})\tensor_{\OO_S} \eta[3]
\end{equation*}
induces
\begin{equation*}
(\tau^{[1,2]}p_{2*}\HHom(\sheaf{E},\sheaf{E}))^\vee \iso
(\tau^{[1,2]}p_{2*}\HHom(\sheaf{E},\sheaf{E}))\tensor_{\OO_S}\eta[3].
\end{equation*}
Since the direct summand $\sheaf{F}$ is both the kernel of the trace\slash cotrace map
and the cokernel of the diagonal\slash codiagonal map, and these two maps are dual,
there is an induced isomorphism $\sheaf{F}^\vee \iso \sheaf{F}\tensor_{\OO_S}\eta[3]$.
Thus the obstruction theory \eqref{eq:my-obstr} (restricted to $M(\sheaf{L}',\sheaf{L})$)
is perfect and symmetric.
\end{proof}

\section{Perfect complexes modulo twist and translate}\label{sec:K}

Let $X$ be an abelian threefold over $S = \Spec k$.

With $X\times\widehat{X}$ acting on a moduli space $M$ of perfect complexes
as before, by translation and twist, define the stack quotient
\begin{equation*}
K = [M/X\times \widehat{X}].
\end{equation*}
In this section we rephrase the results from the previous section,
and as outcome we obtain a numerical invariant of $K$.
This is our Donaldson--Thomas invariant for abelian threefolds.

We pause to explain our point of view: suppose $Y$ is a second abelian variety,
derived equivalent to $X$, i.e.\ there is an equivalence $F\colon D(X) \isoto D(Y)$
of triangulated categories. By Orlov \cite{orlov2002}, there is
an object $\sheaf{C}\in D(X\times Y)$ on the product, such that $F$ is
the Fourier--Mukai functor induced by $\sheaf{C}$:
\begin{equation*}
F(-) = p_{2*}(p_1^*(-)\tensor\sheaf{C})
\end{equation*}
Consequently the equivalence $F$ extends in a natural way to families,
i.e.\ there are equivalences $F_T\colon D(X\times T)\to D(Y\times T)$,
functorial in $T$. In particular, if $\sheaf{E}$ is the universal family
on $X\times M$, then we may replace $\sheaf{E}$ by $F_M(\sheaf{E})$
and view $M$ as a moduli space for complexes on $Y$. We wish to treat
$(X,\sheaf{E})$ and $(Y,F(\sheaf{E}))$ on an equal footing, in particular
our Donaldson--Thomas invariants should be invariant under derived equivalence.

The equivalence $F$ induces an isomorphism
\begin{equation*}
f\colon X\times\widehat{X} \isoto Y\times\widehat{Y}
\end{equation*}
such that \cite[Corollary 2.13]{orlov2002}
\begin{equation*}
T_y^*F(-)\tensor \zeta \iso F(T_x^*(-)\tensor\xi)
\end{equation*}
whenever $(y,\zeta) = f(x,\xi)$. In particular the actions of $X\times\widehat{X}$
and $Y\times\widehat{Y}$ on $M$ are compatible, and there are induced isomorphisms
\begin{equation*}
[M/X\times\widehat{X}] \iso [M/Y\times\widehat{Y}],
\end{equation*}
so the construction of $K$ is invariant under derived equivalence.
(In contrast, the space $M(\sheaf{L}',\sheaf{L})$ is not.)
This setup seems to be natural also for not necessarily abelian varieties:
Rosay \cite{rosay2009} has shown that, for any smooth projective variety $X$,
the identity component of the algebraic group
$\Aut(D(X))$ of autoequivalences is in fact $\Aut^0(X)\times\Pic^0(X)$, and by an
unpublished result of Rouquier \cite[Proposition 9.45]{huybrechts2006}, any
equivalence $F\colon D(X)\isoto D(Y)$ induces an isomorphism
$\Aut^0(X)\times\Pic^0(X)\isoto
\Aut^0(Y)\times\Pic^0(Y)$.

\begin{proposition}\label{prop:DM}
Assume Condition \ref{condition-relative}, and let $G$ be the kernel of the isogeny appearing there.
Then there is an isomorphism
\begin{equation*}
[M(\sheaf{L}',\sheaf{L})/G] \isoto K.
\end{equation*}
In particular $K$ is Deligne-Mumford.
\end{proposition}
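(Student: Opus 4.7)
The plan is to leverage the Cartesian square from the proof of Proposition \ref{prop:isotrivial}, which realizes the action map
\begin{equation*}
a\colon (X\times\widehat{X})\times M(\sheaf{L}',\sheaf{L}) \to M
\end{equation*}
as the pullback along $\delta$ of the isogeny $X\times\widehat{X}\to \Pic^{-\gamma}(\widehat{X})\times\Pic^{c_1}(X)$ from Condition \ref{condition-relative} (which, since here $S=\Spec k$, agrees with Condition \ref{condition}). The isogeny is a $G$-torsor for the translation action of its kernel on the source, so by base change $a$ is a $G$-torsor too, with $G$ acting on the source by the formula $g\cdot(h,m)=(gh,g^{-1}\cdot m)$ (the inverse on the second factor appears because the top arrow of the Cartesian square sends $(h,m)$ to $h\cdot m$, not to $m$; this factor is well-defined because $G$ is precisely the stabilizer of $(\sheaf{L}',\sheaf{L})$ and so preserves $M(\sheaf{L}',\sheaf{L})$).

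Next I would observe that this $G$-action commutes with the $X\times\widehat{X}$-action on the source of $a$ by left translation on the first factor, and that $a$ is equivariant for the latter. The torsor structure therefore yields an identification
\begin{equation*}
M \iso (X\times\widehat{X})\times^G M(\sheaf{L}',\sheaf{L}),
\end{equation*}
and passing to stack quotients by $X\times\widehat{X}$ the free principal $X\times\widehat{X}$-bundle on the right collapses to its base, giving
\begin{equation*}
K = [M/X\times\widehat{X}] \iso [M(\sheaf{L}',\sheaf{L})/G].
\end{equation*}
This is an instance of the standard cancellation $[P/H\times G']\iso [Y/G']$ when $P\to Y$ is an $H$-torsor equivariant for a commuting $G'$-action, applied with $H=X\times\widehat{X}$, $G'=G$, and $P$ the source of $a$.

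For the Deligne--Mumford claim, $G$ is the kernel of an isogeny of abelian schemes and is therefore finite, and étale because the ground field has characteristic zero; the stack quotient of an algebraic space of finite type by a finite étale group scheme is automatically Deligne--Mumford. The only real subtlety is the bookkeeping in the first step---verifying that the commuting $G$- and $X\times\widehat{X}$-actions on $(X\times\widehat{X})\times M(\sheaf{L}',\sheaf{L})$ are transported correctly through the Cartesian square---but no essentially new input beyond Proposition \ref{prop:isotrivial} is required.
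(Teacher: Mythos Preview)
Your proposal is correct and follows essentially the same route as the paper: the paper's proof consists of the single observation that $\delta$ is $X\times\widehat{X}$-equivariant (via the matrix in Condition~\ref{condition-relative}) and then says ``the statement follows'', whereas you spell out explicitly how it follows from the Cartesian square of Proposition~\ref{prop:isotrivial}. Your bookkeeping on the commuting $G$- and $X\times\widehat{X}$-actions is accurate, and the Deligne--Mumford claim is handled exactly as intended.
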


\begin{proof}
As we saw in the proof of Proposition \ref{prop:isotrivial}, the determinant\slash codeterminant
morphism \eqref{eq:delta} is $X\times\widehat{X}$-equivariant, when the action on the
base is defined via the matrix in Condition \ref{condition-relative}. The statement follows.
\end{proof}

Although it is easy to verify that the obstruction theory in Theorem \ref{thm:obstr}
is $G$-equivariant as an element in the derived category of $M(\sheaf{L}',\sheaf{L})$,
this does not suffice to conclude that there is an induced obstruction theory
on the quotient $K$. Bypassing this difficulty, we define the Donaldson--Thomas
invariant directly, as what it is if the obstruction theory does descend:
let $[M(\sheaf{L}',\sheaf{L})]^{\mathrm{vir}}$ be the virtual fundamental class
associated to the obstruction theory of Theorem \ref{thm:obstr}.

\begin{definition}
Assume Condition \ref{condition-relative} and that $M(\sheaf{L}',\sheaf{L})$ is proper over $k$.
The \emph{Donaldson--Thomas invariant} of the Deligne-Mumford stack $K$ is
\begin{equation*}
\DT(K) = \frac{1}{|G|} \deg [M(\sheaf{L}',\sheaf{L})]^{\mathrm{vir}}.
\end{equation*}
\end{definition}

\begin{corollary}[of Theorem \ref{thm:obstr}]\label{cor:deform}
Let $X/S$ be an abelian scheme of relative dimension $3$, and let $M/S$ be
a proper moduli space for perfect complexes, satisfying Condition \ref{condition-relative}.
Then the Donaldson--Thomas invariant $\DT(K_s)$
associated to $K_s = [M_s/X_s\times\widehat{X}_s]$ is independent of $s\in S$.
\end{corollary}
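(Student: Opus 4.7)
The plan is to apply Theorem~\ref{thm:obstr} in its full relative form and then invoke the standard deformation invariance of virtual fundamental classes attached to a relative perfect obstruction theory.

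First I need the relative moduli space $M(\sheaf{L}',\sheaf{L})\subset M$ to be defined globally. After passing to an étale cover of $S$—harmless, since $\DT(K_s)$ is defined fiberwise—I may pick global sections $\sheaf{L}$ of $\Pic^{c_1}(X/S)$ and $\sheaf{L}'$ of $\Pic^{-\gamma}(\widehat X/S)$, and then $M(\sheaf{L}',\sheaf{L})$ is defined and proper over $S$. Condition~\ref{condition-relative} holds globally as soon as it holds in one fiber, by Proposition~\ref{prop:condition}, and the isogeny kernel $G\to S$ is then a finite flat group scheme, so $|G_s|$ is constant on the connected base $S$. By Proposition~\ref{prop:DM}, different fiberwise choices of $(\sheaf{L}'_s,\sheaf{L}_s)$ compute the same $\DT(K_s)$, so choosing a global lift this way is allowed.

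Theorem~\ref{thm:obstr} now furnishes a \emph{relative} perfect symmetric obstruction theory $\sheaf{F}^\vee[-1]\to L_{M(\sheaf{L}',\sheaf{L})/S}$ of virtual relative dimension zero (symmetry forces the virtual rank to vanish). The Behrend--Fantechi machinery \cite{BF97} then produces a relative virtual fundamental class, and by the standard compatibility of such classes with base change along the regular embedding $\Spec\kappa(s)\hookrightarrow S$, its refined Gysin pullback $i_s^!$ agrees with the absolute virtual class $[M(\sheaf{L}'_s,\sheaf{L}_s)]^{\mathrm{vir}}$ that defines $\DT(K_s)$. Properness of $M(\sheaf{L}',\sheaf{L})/S$ makes $s\mapsto \deg\bigl[M(\sheaf{L}'_s,\sheaf{L}_s)\bigr]^{\mathrm{vir}}$ locally constant on $S$, hence constant on the connected base; dividing by the constant $|G_s|$ yields the claim.

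The main obstacle is verifying that the restriction of Theorem~\ref{thm:obstr}'s relative obstruction theory to a fiber reproduces the absolute obstruction theory used to define $\DT(K_s)$. Concretely, one has to check that the formation of $\tau^{[1,2]}p_{2*}\HHom(\sheaf{E},\sheaf{E})$, the (co)trace and (co)diagonal maps, the Fourier--Mukai identification of Lemma~\ref{lem:fm-hom}, and therefore the splitting of Lemma~\ref{lem:split}, all commute with restriction to the fiber $M_s$. This reduces to flat base change along the proper flat morphism $p_2$ applied to the perfect complex $\HHom(\sheaf{E},\sheaf{E})$, together with functoriality in $T$-valued points of the constructions of $\iota$, $\tr$, $\widehat\iota$ and $\widehat{\tr}$.
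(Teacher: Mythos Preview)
Your argument is correct and follows essentially the same route as the paper: use the relative perfect obstruction theory of Theorem~\ref{thm:obstr}, invoke deformation invariance of the virtual class (the paper cites \cite[Proposition 7.2]{BF97} and \cite[Corollary 4.3]{HT2010} for this step), and observe that $|G_s|$ is constant because $G\to S$ is finite flat. Your \'etale base change to choose $\sheaf{L}$ and $\sheaf{L}'$ globally is unnecessary here, since in the paper's relative setup (Section~\ref{sec:split}) these line bundles are already fixed on $X/S$ and $\widehat{X}/S$ before Condition~\ref{condition-relative} is even stated; likewise, your final paragraph on base-change compatibility of the splitting is more explicit than the paper, which absorbs this into the citation of \cite{HT2010}.
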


\begin{proof}
Since the endomorphism in Condition \ref{condition} is an isogeny, its kernel $G$
is flat and finite over $S$, so its order $|G_s|$ is constant in $s$.

The existence of a relative obstruction theory implies
(see \cite[Proposition 7.2]{BF97} and \cite[Corollary 4.3]{HT2010})
that the degree
of the virtual fundamental class of $M_s(\sheaf{L}'_s,\sheaf{L}_s)$ is constant in $s$.
Hence the Donaldson--Thomas invariant
\begin{equation*}
\DT(K_s) = \frac{1}{|G_s|} \deg [M_s(\sheaf{L}'_s,\sheaf{L}_s)]^{\mathrm{vir}}.
\end{equation*}
is independent of $s$.
\end{proof}

Recall Behrend's theorem \cite{behrend2009} on weighted Euler characteristics: whenever
a proper $k$-scheme (or Deligne-Mumford stack) $M$ admits a perfect symmetric
obstruction theory, the degree of the associated virtual fundamental class agrees with
the weighted Euler characteristic
\begin{equation*}
\tilde{\chi}(M) = \sum_{n\in \ZZ} n \chi(\nu^{-1}(n))
\end{equation*}
where $\chi$ is the usual topological Euler characteristic,
and $\nu\colon M\to \ZZ$ is Behrend's invariant of singularities \cite[Definition 1.4]{behrend2009}.

\begin{corollary}[of Behrend's theorem]
Assume Condition \ref{condition}.
The Donaldson--Thomas invariant $\DT(K)$ agrees with Behrend's weighted Euler
characteristic $\tilde{\chi}(K)$, and hence is an intrinsic invariant of
the Deligne-Mumford stack $K$.
\end{corollary}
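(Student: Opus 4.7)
The plan is to reduce the corollary to Behrend's theorem applied to the scheme (or algebraic space) $M(\sheaf{L}',\sheaf{L})$, and then propagate the equality from $M(\sheaf{L}',\sheaf{L})$ down to the quotient stack $K$ via the étale $G$-torsor provided by Proposition \ref{prop:DM}.

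First, observe that under Condition \ref{condition} the hypotheses of Theorem \ref{thm:obstr} are met (with $S=\Spec k$), so $M(\sheaf{L}',\sheaf{L})$ carries a perfect symmetric obstruction theory in the absolute sense. Moreover, since $\DT(K)$ is defined, $M(\sheaf{L}',\sheaf{L})$ is proper. Behrend's theorem \cite{behrend2009} therefore yields
\begin{equation*}
\deg [M(\sheaf{L}',\sheaf{L})]^{\mathrm{vir}} = \tilde{\chi}(M(\sheaf{L}',\sheaf{L})).
\end{equation*}

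Next, I would establish the multiplicativity $\tilde{\chi}(M(\sheaf{L}',\sheaf{L})) = |G|\cdot \tilde{\chi}(K)$ under the quotient map $q\colon M(\sheaf{L}',\sheaf{L}) \to K$ furnished by Proposition \ref{prop:DM}. The key point is that $q$ is étale (it is a torsor under the finite group scheme $G$, which is étale in characteristic zero). Behrend's function $\nu$ is an intrinsic, étale-local invariant of the singularity \cite[§1]{behrend2009}, so $\nu_{M(\sheaf{L}',\sheaf{L})} = q^{*}\nu_K$. Since the topological Euler characteristic of a DM stack satisfies $\chi(M(\sheaf{L}',\sheaf{L})) = |G|\cdot \chi(K)$ for a $G$-torsor $q$, the same multiplicativity holds on each level set of $\nu$, giving
\begin{equation*}
\tilde{\chi}(M(\sheaf{L}',\sheaf{L})) = \sum_n n\,\chi(\nu_{M(\sheaf{L}',\sheaf{L})}^{-1}(n)) = |G|\sum_n n\,\chi(\nu_K^{-1}(n)) = |G|\cdot \tilde{\chi}(K).
\end{equation*}

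Combining the two displayed equalities with the definition of $\DT(K)$ yields $\DT(K) = \tilde{\chi}(K)$. Intrinsicality is then automatic: the right-hand side $\tilde{\chi}(K)$ is defined directly from $K$, with no reference to the presentation $M(\sheaf{L}',\sheaf{L})$, to the choice of $(\sheaf{L}',\sheaf{L})$, or to the ambient abelian threefold $X$.

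The main technical obstacle is the second step, namely justifying that Behrend's weighted Euler characteristic descends correctly along a torsor under a finite (étale) group scheme acting on a DM stack. One needs the étale-local nature of $\nu$ (so that $\nu$ pulls back under $q$) together with the multiplicativity of $\chi$ for finite étale maps of DM stacks. Both are standard but need to be invoked cleanly; once they are in hand, the corollary follows formally from Behrend's theorem and Proposition \ref{prop:DM}.
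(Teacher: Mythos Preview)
Your proposal is correct and follows essentially the same route as the paper: apply Behrend's theorem to $M(\sheaf{L}',\sheaf{L})$ and then use that the quotient map $M(\sheaf{L}',\sheaf{L})\to K$ is \'etale of degree $|G|$ to descend the weighted Euler characteristic. The paper compresses this into a single line, while you spell out the two ingredients (\'etale-locality of $\nu$ and multiplicativity of $\chi$) that justify the descent.
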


\begin{proof}
The quotient map $M(\sheaf{L}',\sheaf{L}) \to K$ is \'etale of degree $|G|$,
so
\begin{equation*}
\DT(K) = \frac{1}{|G|} \tilde{\chi}(M(\sheaf{L}',\sheaf{L})) = \tilde{\chi}(K).
\end{equation*}
\end{proof}

We return to Condition \ref{condition}, with the viewpoint that we are free
to replace $(X,\sheaf{E})$ with a derived equivalent pair. A coherent
sheaf $\sheaf{E}$, on an abelian variety of dimension $g$, is \emph{semi-homogeneous},
if the locus
\begin{equation}\label{eq:phi}
\Phi(\sheaf{E}) = \left\{ (x,\xi) \in X\times\widehat{X} \ \vline\ T_x^*\sheaf{E} \iso \sheaf{E}\tensor\sheaf{P}_\xi \right\}
\end{equation}
has dimension $g$ (the maximal possible) \cite{mukai78, mukai94}.
The proof of Proposition \ref{prop:isotrivial} shows that Condition \ref{condition}
implies that $\Phi(\sheaf{E})$ is finite, so semi-homogeneous sheaves fail the condition.
Conversely:

\begin{proposition}\label{prop:semihom}
Let $X$ be an abelian threefold over $\CC$ with Picard number $1$.
If $\sheaf{E}$ is a coherent sheaf on $X$,
such that $F(\sheaf{E})$ fails Condition \ref{condition} for all
autoequivalences $F$ of $D(X)$, then $\sheaf{E}$ is semi-homogeneous.
\end{proposition}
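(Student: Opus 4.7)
The plan is to translate the failure of Condition \ref{condition} into an explicit numerical equation on the Chern character of $\sheaf{E}$, propagate this equation along the one-parameter family of autoequivalences given by twisting with powers of an ample line bundle, and then invoke Mukai's numerical characterization of semi-homogeneous sheaves. Fix an ample generator $h$ of $\NS(X)_\QQ$; since $X$ has Picard number one, every algebraic class in $H^{2p}(X,\QQ)$ is a rational multiple of $h^p$, so I write $c_1=dh$ and $\gamma=eh^2$ with $d\in\ZZ$ and $e\in\QQ$. Combining Lemma \ref{lemma:isogeny} with Remark \ref{rem:alpha} (and the formula $\alpha(\gamma,c_1)=-\sigma\rho\phi_{c_1}$ from the proof of that lemma to handle the degenerate case $c_1=0$), Condition \ref{condition} fails for $\sheaf{E}$ precisely when the scalar equation $3r\chi = edH^3$ holds, where $H^3=\deg(h^3)$.

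Next I apply this criterion to each autoequivalence $F_n(-)=(-)\tensor\sheaf{L}^n$, for $\sheaf{L}$ a line bundle of class $h$ and every $n\in\ZZ$. Expanding $\ch(F_n\sheaf{E})=\ch(\sheaf{E})\cdot e^{nh}$, substituting into the failure equation and collecting powers of $n$, the coefficients of $n^2$ and $n^3$ cancel identically, so the requirement that the equation hold for every $n$ is equivalent to the pair
\begin{equation*}
3r\chi = edH^3 \quad\text{and}\quad 2re = d^2.
\end{equation*}
When $r\neq 0$ these force $e=d^2/(2r)$ and $\chi=d^3H^3/(6r^2)$, which says exactly that $\ch(\sheaf{E})=r\exp(c_1/r)$; this is Mukai's numerical characterization of the Chern character of a semi-homogeneous sheaf \cite{mukai78, mukai94}, and semi-homogeneity of $\sheaf{E}$ follows.

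The main obstacle is the rank zero case, where the tensoring step only forces $d=0$ and leaves $\gamma,\chi$ unconstrained. Here I bring in the Fourier--Mukai transform as an additional autoequivalence: $\widehat{\sheaf{E}}$ has Chern character $(\chi,-\gamma,0,0)$ on $\widehat{X}$, with $-\gamma$ read as a divisor class via \eqref{eq:PD}. If $\chi\neq 0$, the preceding argument applied to $\widehat{\sheaf{E}}$ shows that it is semi-homogeneous, whence so is $\sheaf{E}$, because semi-homogeneity is preserved by Fourier--Mukai, translations, and shifts. If $\chi=0$ too, rerunning the tensoring argument on $\widehat{\sheaf{E}}$ forces its first Chern class (hence $\gamma$) to vanish; the Mukai vector of $\sheaf{E}$ is then zero, reducing the statement to the trivial case $\sheaf{E}=0$.
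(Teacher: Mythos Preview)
Your computational route---deduce $3r\chi=edH^3$ and $2re=d^2$ from the failure of Condition \ref{condition} under all twists, then read off $\ch=r\exp(c_1/r)$---is the same idea as the paper's (the paper twists once by an ample $H$ and takes the difference, which isolates precisely your linear-in-$n$ coefficient). The genuine gap is the final inference. The numerical criterion you invoke says that a \emph{vector bundle} with Chern character $r\exp(c_1/r)$ is semi-homogeneous; it is not a statement about arbitrary coherent sheaves. For instance $\sheaf{E}=\sheaf{I}_C\oplus\OO_C$, with $C\subset X$ any curve, has $\ch=1=\exp(0)$, yet on a simple abelian threefold the translation stabilizer of $C$ is finite, so $\Phi(\sheaf{E})$ is zero-dimensional and $\sheaf{E}$ is not semi-homogeneous. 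The paper closes this gap at the very start: since semi-homogeneity is preserved by derived equivalence, one may first replace $\sheaf{E}$ by a vector bundle using an autoequivalence of the form ``twist very positively, Fourier--Mukai, twist very positively on $\widehat X$, Fourier--Mukai back''. Once in the vector-bundle regime, the numerical criterion applies directly and the rank is automatically positive, so no separate $r=0$ case is needed.

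Your $r=0$ branch inherits the same problem in amplified form: $\widehat{\sheaf{E}}$ need not be a sheaf, so the ``preceding argument'' and the numerical criterion do not apply to it; and the Fourier--Mukai functor lands in $D(\widehat X)$ rather than $D(X)$, so to stay within the hypothesis ``for all autoequivalences of $D(X)$'' you must compose with a return equivalence. Both issues disappear once you perform the paper's preliminary reduction to a vector bundle.
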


\begin{proof}
Semi-homogeneity is invariant under derived equivalence, so we are free
to replace $\sheaf{E}$ with any transform $F(\sheaf{E})$.

The twist of any sheaf with a sufficiently positive divisor is IT$_0$,
and the Fourier--Mukai transform of an IT$_0$-sheaf is a vector bundle.
Thus we may assume $\sheaf{E}$ is a vector bundle, using the derived
equivalence $F$ defined as the composition of a very positive twist,
the Fourier--Mukai transform, another very positive twist on $\widehat{X}$,
and the Fourier--Mukai transform back to $X$.

Semi-homogeneous vector bundles are numerically characterized \cite{yang89}
by having Chern character $\ch = r\exp(c_1/r)$. In dimension $3$, this means
that $r\gamma = c_1^2/2$ and $r\chi = \gamma c_1/3$. Since the Picard number is $1$,
we have $\alpha(\sigma,\tau) = \sigma\tau/3$ for any two complementary cycles $\sigma$
and $\tau$, by Remark \ref{rem:alpha}.
Thus
\begin{equation*}
r\chi - \alpha(\gamma,c_1) = 0
\end{equation*}
implies $r\chi = \gamma c_1/3$. Let $H$ be an ample divisor, denote its cohomology
class by $h$, and use primed symbols to denote the components of the
Chern character $\ch(\sheaf{E}) \exp(h)$ of $\sheaf{E}(H)$.
A small computation, using basic properties of $\alpha$ \cite[Proposition 1]{matsusaka59}, shows that
\begin{equation*}
\big(r\chi - \alpha(\gamma,c_1)\big) -
\big(r'\chi' - \alpha(\gamma',c_1')\big) 
=
(r\gamma - c_1^2/2)h - \alpha(r\gamma - c_1^2/2,h)
\end{equation*}
(this holds without any condition on the Picard number). By assumption, the left
hand side is zero, and the vanishing of the right hand side implies that $r\gamma = c_1^2/2$.
\end{proof}

In relation to the proposition, one may ask whether there exist honest
simple semi-homogeneous complexes.  Here is a weaker, but easy
observation:

\begin{remark}
Let $X$ be an abelian variety of dimension $g$, with a
perfect complex $\sheaf{E}$ having no \emph{twisted} negative Ext-groups:
\begin{equation*}
\Ext^{-p}(\sheaf{E},\sheaf{E}\tensor\sheaf{P}_\xi) = 0,\quad
\text{$\forall$ $p>0$ and $\xi\in\widehat{X}$}
\end{equation*}
Suppose $\sheaf{E}$ is semi-homogeneous in the sense that the locus $\Phi(\sheaf{E})$
in Equation \eqref{eq:phi} has dimension $g$.
We claim that $\sheaf{E}$ is isomorphic to a single semi-homogeneous sheaf,
up to shift.

In fact, since $\Phi(\sheaf{E}) \subseteq \Phi(h^p(\sheaf{E}))$,
each $h^p(\sheaf{E})$ is semi-homogeneous as well.
By Mukai's classification
of semi-homogeneous vector bundles \cite[Section 6]{mukai78} (and a little care to cover non locally free semi-homo\-ge\-neous
sheaves), each $h^p(\sheaf{E})$ admits a Jordan-H\"older filtration
with simple semi-homogeneous factors, and all these factors agree up to twist by
$\widehat{X}$. Thus if $a$ is minimal and $b$ is maximal such that
$h^p(\sheaf{E}) \ne 0$ for $p=a,b$, there are a simple semi-homogeneous
subsheaf $\sheaf{F}\subseteq h^a(\sheaf{E})$, an element $\xi\in\widehat{X}$,
and a surjection $h^b(\sheaf{E})\onto\sheaf{F}\tensor\sheaf{P}_\xi$.
The composition
\begin{equation*}
\sheaf{E}[-b] \to h^b(\sheaf{E}) \onto \sheaf{F}\tensor \sheaf{P}_\xi \into h^a(\sheaf{E}\tensor\sheaf{P}_\xi) \to \sheaf{E}\tensor\sheaf{P}_\xi[-a]
\end{equation*}
is an element in $\Ext^{a-b}(\sheaf{E},\sheaf{E}\tensor\sheaf{P}_\xi)$,
and it is nonzero since it induces a nonzero map on $h^0$.
By hypothesis we cannot have $a-b<0$, so $a=b$.
\end{remark}

\section{Examples}\label{sec:examples}

\subsection{Picard bundles}

Let $i\colon C \into X$ be a smooth projective curve embedded in its Jacobian $X=J(C)$
via the Abel-Jacobi map, and let $\sheaf{L}$ be a line bundle on $C$.
If $\sheaf{L}$ is sufficiently positive or negative,
the Fourier--Mukai transform $\widehat{i_*\sheaf{L}}$ is a (possibly shifted) vector bundle:
such bundles are called Picard bundles.
One of Mukai's original applications for his transform \cite{mukai81} was to equate deformations
of Picard bundles $\widehat{i_*\sheaf{L}}$ with deformations of $i_*\sheaf{L}$:
let $M$ be the connected component containing $i_*\sheaf{L}$, of the Simpson moduli space
for stable sheaves with respect to an arbitrary polarization.
The action
\begin{equation}\label{eq:picard-iso}
X\times\widehat{X} \to M,\quad (x,\xi)\mapsto T_{-x}^*(i_*\sheaf{L})\tensor\sheaf{P}_\xi
\end{equation}
is an \emph{isomorphism} whenever $C$ is nonhyperelliptic or has genus two \cite[Theorem 4.8]{mukai81}.
Thus
$K=[M/X\times\widehat{X}]$ is a reduced point and
\begin{align*}
\tilde\chi(K) &= \nu(\text{point}) = 1
& &\text{($C$ nonhyperelliptic or $g(C)=2$)}
\intertext{%
For hyperelliptic $C$
of genus $g>3$, the map \eqref{eq:picard-iso} is an isomorphism onto $M_{\text{red}}$,
but $M$ is everywhere nonreduced \cite[Example 1.15]{mukai87} (this
comes from obstructed first order deformations of $C$ in $X$). Thus
$K$ is a point with a nonreduced scheme structure and}
\tilde\chi(K) &= \nu(\text{nonreduced point}) > 1
& &\text{($C$ hyperelliptic and $g(C)>2$).}
\end{align*}
There is, of course, nothing ``virtual'' about these counts,
which make sense for any dimension $g$.

\subsection{Hilbert schemes of points}

Let $M$ be the moduli space for coherent sheaves with rank $1$,
$c_1=0$, $\gamma=0$ and $\chi=-n$, on an abelian threefold $X$.
Any such sheaf is a twisted ideal $\sheaf{I}_Z\tensor\sheaf{P}_\xi$,
where $Z\subset X$ is a finite subscheme of length $n$ and $\xi\in \widehat{X}$.
Thus
\begin{equation*}
M = \widehat{X} \times \Hilb^n(X).
\end{equation*}
Projection to $\widehat{X}$ agrees with the determinant map on $M$,
and projection to the Hilbert scheme followed by the summation map
\begin{equation}\label{eq:sum}
\Hilb^n(X) \to X
\end{equation}
agrees, up to sign, with the codeterminant map.
Thus $M(\OO_{\widehat{X}},\OO_X)$ is the fibre over
$0$ for \eqref{eq:sum},
and will henceforth be denoted $K^n(X)$
(this would have been Beauville's generalized Kummer variety if $X$ were a surface).
Condition \ref{condition} holds, and the kernel of the isogeny there is
the group of $n$-torsion points
\begin{equation*}
X_n = X_n\times 0 \subset X\times \widehat{X}.
\end{equation*}
Thus $K = [K^n(X)/X_n]$.

Behrend--Fantechi \cite{BF2008} show that for Hilbert schemes of points
on threefolds, the weighted Euler characteristic
and the topological Euler characteristic agree up to sign.
Their method is easily adapted to $K^n(X)$ and gives
\begin{equation*}
\tilde{\chi}(K^n(X)) = (-1)^{n+1} \chi(K^n(X)).
\end{equation*}

We also mention that Cheah's formula \cite{cheah96} for the Euler characteristic of any Hilbert scheme
of points leads us to conjecture that, for $X$ an abelian variety of dimension $g$,
\begin{equation*}
\exp\Big( \sum_{n=1}^\infty \frac{\chi(K^n(X))}{n^{2g}}t^n \Big)
= 
\sum_{n=0}^\infty P_{g-1}(n) t^n
\end{equation*}
where $P_d(n)$ is the number of $d$-dimensional partitions of $n$. (For $g=1$ and $g=2$,
the formula does hold.)
Using MacMahon's generating function for plane partitions, the conjectured
formula for $g=3$ can be written
\begin{equation*}
\chi(K^n(X)) = n^5 \sum_{d|n} d^2
\end{equation*}
giving, still conjecturally,
\begin{equation*}
\DT(K) = \frac{(-1)^{n+1}}{|X_n|} \chi(K^n(X)) = \frac{(-1)^{n+1}}{n} \sum_{d|n} d^2.
\end{equation*}

\bibliographystyle{amsplain}
\bibliography{all}

\end{document}